\DeclareMathAlphabet{\mathpzc}{OT1}{pzc}{m}{it}
\theoremstyle{plain} 
\newtheorem{theorem}{Theorem}[section]
\newtheorem{proposition}[theorem]{Proposition}
\theoremstyle{definition} 
\newtheorem{definition}[theorem]{Definition}
\newtheorem{remark}[theorem]{Remark}
\newtheorem{corollary}[theorem]{Corollary}
\newcommand{\type}{{\it type}}
\newcommand{\mf}{{\bf MF}}
\newcommand{\hott}{{\bf HoTT}}
\newcommand{\mtt}{\mbox{{\bf mTT}}}
\newcommand{\emtt}{\mbox{{\bf emTT}}}
\newcommand{\mltt}{\mbox{{\bf MLTT}}}
\newcommand{\ourdef}{{\ :\equiv\ }}
\begin{document}

\newgeometry{left= 3.2cm, right= 3.2cm}
\title{On the Compatibility of Constructive Predicative Mathematics with Weyl's Classical Predicativity}

\author{Michele Contente \\ \footnotesize Institute of Philosophy of the Czech Academy of Sciences, Prague, Czechia \\ \and Maria Emilia Maietti \\ \footnotesize Department of Mathematics, University of Padova, Italy}

\date{\today}

\maketitle
\begin{abstract}
\noindent  It is well known that most constructive and predicative foundations  aiming to develop Bishop's constructive analysis are incompatible with a classical predicative development of analysis as put forward by Weyl in his {\it Das Kontinuum}.

 Here, we show how this incompatibility arises from the possibility to define sets by quantifying over (the exponentiation of) functional relations. Such a possibility is present in most constructive foundations but it is not allowed in modern reformulations of Weyl's logical system. In particular, we show how in Aczel's Constructive Set Theory, Martin-L\"of's type theory and Homotopy Type Theory, the incompatibility with classical predicativity \`a la Weyl reduces to the fact of being able to interpret Heyting arithmetic in all finite types  with the addition of the internal rule of number-theoretic unique choice, identifying functional relations over natural numbers with a primitive notion of function defined as  $\lambda$-terms of type theory.
 
\noindent Then,  we argue that a possible way out is offered by constructive foundations, such as  the Minimalist Foundation,
where exponentiation is limited to functions defined as $\lambda$-terms of (dependent) type theory. 
 
\noindent The price to pay is to renounce number-theoretic choice principles, including the rule of unique choice,  typical of most foundations formalizing Bishop's constructive mathematics. This restriction calls for a {\it point-free}  constructive development of topology as advocated by P. Martin-L\"of and G. Sambin with the introduction of Formal Topology.

\noindent We then conclude that the Minimalist Foundation promises to be a natural crossroads between Bishop's constructivism and Weyl's classical predicativity provided that a point-free constructive reformulation of analysis is viable.
%

\end{abstract}

\tableofcontents

\section{Introduction}
The book {\it Das Kontinuum} \cite{Weyl+1918, weyl1994continuum} is a milestone in the foundations of mathematics and, in particular, of classical predicative mathematics. Weyl's monograph is mainly devoted to the predicative reformulation of real analysis.

Another major reformulation of real analysis within a foundation weaker than the usual classical one was given by Bishop in \cite{bishop1967}. While Bishop followed Brouwer in adopting intuitionistic logic, he refrained in his approach from including  Brouwer's continuity principles that are inconsistent with classical mathematics. 
Bishop aimed to develop {\it constructive} mathematics not in opposition to classical mathematics but as a generalization of the latter.

 After the publication of Bishop's book, there was a flourishing of foundations for constructive mathematics aimed at formally capturing Bishop's approach. A relevant example in axiomatic set theory is
Aczel's constructive set theory {\bf CZF}  \cite{aczel78},  and in type theory Martin-L\"of Type Theory {\bf MLTT} \cite{martin-lof:bibliopolis, nordstrom1990programming}  and the more recent Homotopy Type Theory \hott\  in \cite{hottbook} inspired by  Voevodsky's Univalent Foundations. In particular, \hott\  gained a lot of attention
for its application to the formalization of synthetic homotopy theory and higher category theory.
However, these systems are sometimes not even mutually compatible and this consideration led the second author in joint work with G. Sambin to conceive
the Minimalist Foundation (for short \mf)   in \cite{ms05}, later fully developed as a two-level system in \cite{m09}.
The two-level structure of  {\bf MF} in \cite{m09}, which includes an intensional level, an extensional  level, and  an interpretation of the latter into the first,
was introduced to better achieve the compatibility of {\bf MF}  with constructive and classical foundations including those mentioned (see \cite{m09,cm2024}) as well as internal theories of categorical universes such as that of elementary topoi in \cite{LS1986, Maietti05}.

It is also well known that, among all the foundations of constructive and predicative mathematics, at least the mentioned ones, {\bf CZF},  \mltt\ and \hott,
 are incompatible with a classical predicative development of mathematics as advocated by Weyl in  {\it Das Kontinuum}, since
they
 become impredicative when extended with the law of excluded middle.
 
Here we first discuss the source of this incompatibility. The reason lies in their possibility of defining sets by quantifying over (the exponentiation of) functional relations. Such a possibility is not allowed in modern formulations of
 Weyl's system \cite{DBLP:journals/bsl/Avron20, adams2010weyl, DBLP:journals/apal/AdamsL10, feferman1988weyl}.
More formally, we show that  the incompatibility  of  {\bf CZF}, {\bf MLTT} and \hott\
with classical predicativity \`a la Weyl reduces to the fact of being able to interpret Heyting arithmetic in all finite types  with the addition of the internal rule of number-theoretic unique choice, identifying functional relations over natural numbers with a primitive notion of function defined as  $\lambda$-terms of type theory.
 
Then, we argue that it is not necessary to sacrifice exponentiation altogether to reconcile Bishop's mathematics with Weyl's approach if we adopt a foundation like the Minimalist Foundation and this for various reasons.

First of all, the intuitionistic version of the subsystem of second-order arithmetic known as {\bf ACA} \cite{simpson2009subsystems} can be interpreted in the extensional level of \mf\ by preserving the meaning of all the logical operators, as well as the classical {\bf ACA} can be analogously interpreted in the classical extension
of \mf. Therefore, Weyl's reconstruction of analysis can be performed in the classical version of \mf,
since it is known from results of reverse mathematics in \cite{feferman1988weyl, feferman1998light,simpson2009subsystems} that the formalization of Weyl's analysis can be performed at least in {\bf ACA}, whilst the proof-theoretic strength of Weyl's original logical system exceeds it, see \cite{DBLP:journals/bsl/Avron20, adams2010weyl, DBLP:journals/apal/AdamsL10, feferman1988weyl}.

Second, the classical version of \mf\ is equiconsistent, as shown in \cite{talkmillyober, MSdoubleneg}, with the original constructive \mf, whose proof-theoretic strength
is strictly predicative à la Feferman being interpretable in first-order  Martin-L\"of's type theory  with one universe \cite{m09} and directly in Feferman's {\bf $\widehat{ID_1}$} (see \cite{IMMS}).

Third,  in both levels of \mf,  whilst the exponentiation of functional relations does not necessarily form a set,  it does so the exponentiation of a primitive notion of function
defined by $\lambda$-terms. This is possible since, in both levels of \mf, the logic is many-sorted with sorts given by dependent types 
equipped with elements defined by $\lambda$-terms as in Martin-L\"of's type theory.

\noindent The price to be paid in \mf\  is giving up the so-called {\it rule of unique choice} identifying functional relations with $\lambda$-terms. In this way, exponentiation of type-theoretic functions does not entail exponentiation of functional relations as it happens
 instead in Martin-L\"of's type theory and in Homotopy Type Theory, whose logics are also sorted on a dependent type theory. Accordingly, dropping the rule of unique choice entails dropping all the number-theoretic choice principles characteristic of Bishop's mathematics as well as the fact that real numbers à la Dedekind or Cauchy form a set.
 
 As a consequence, these limitations call for a {\it point-free}  constructive development of topology as advocated by Martin-L\"of and Sambin with the introduction of Formal Topology \cite{sambin1987} to avoid the use of Brouwer's Fan theorem as explained in \cite{MSpointfree}. Evidence that such an approach
can be successfully developed is provided by the point-free results in  \cite{CN95,Palmgren05, pal07,spa, kawai2023, sambinbook}.

Given that the point-free topology of Dedekind real numbers and  that of Cantor space are definable in \mf,
if a point-free reformulation of classical analysis were fully viable,
then
  \mf\  would become a natural crossroads between Bishop's constructivism and Weyl's classical predicativity.

In the future, besides determining the exact proof-theoretic strength of  \mf,  which is currently an open problem,
we intend to investigate whether the  extensions of \mf\   with inductive and coinductive definitions in \cite{MMR21, MMR22, MS2023}
are still  equiconsistent with their classical counterparts as established for \mf\ in \cite{talkmillyober, MSdoubleneg}, or at least  these classical counterparts are still predicative.
A positive answer to one of these questions would provide more expressive foundations than \mf\ capable of formalizing a whole development of point-free topology, including for example the results in \cite{mv04,curicind,CPS11,sambinbook}, while still keeping the compatibility with both Bishop's constructive mathematics and Weyl's classical predicative mathematics.

\noindent  The structure of the paper is the following: first, we give a short overview of Weyl's classical predicative conception in {\it Das Kontinuum} and highlight the points of convergence with some ideas underlying the Minimalist Foundation. In Section \ref{sec3} we present some well-known examples of constructive predicative foundations. Then, in Section \ref{sec4} we show that already Heyting Arithmetic in all finite types is incompatible with classical predicativity if it is extended with classical logic and the (number-theoretic) internal rule of unique choice. In Section \ref{sec5} we extend this argument to other systems and analyze the incompatibility of \mltt, \hott\ and {\bf CZF} with classical predicativity. Finally, in Section \ref{sec6} we introduce the Minimalist Foundation and argue that it might represent a natural solution to the issues discussed in the previous sections.

\section{Weyl's Classical Predicativity}\label{sec2}

\noindent {\it Das Kontinuum} \cite{Weyl+1918, weyl1994continuum} can be regarded as the first systematic attempt to develop {\it predicative} mathematics. The book's main focus is the foundation of analysis and the reconstruction of many fundamental results in this area without resorting to impredicative principles. Examples of theorems shown by Weyl in his framework include  {\it sequential} versions of the Heine-Borel Theorem and the Least Upper Bound principle, Uniform continuity for continuous functions over a closed interval, the existence of a maximum and a minimum for them as well as 
the Fundamental Theorem of the Calculus.

Therefore, the main merit of {\it Das Kontinuum} was to show that predicative mathematics was a viable project. Feferman has shown that the above mentioned results can be formalized in the subsystem of second-order arithmetic known as {\bf ACA}$_0$ and further evidence in support of this claim has been provided by the Reverse Mathematics program \cite{simpson2009subsystems}. In this section, we review some basic tenets of Weyl's approach by relating them to the Minimalist Foundation, which will be introduced later in Section \ref{sec6}.

 \noindent Weyl acknowledged - in agreement with Poincar\'e- the primitive character of the natural numbers and the associated notion of iteration. According to Weyl, every attempt to understand natural numbers in terms of more fundamental notions -- as it happens in Zermelo's set theory or in the logicist program of developing arithmetic from purely logical notions-- was doomed to failure because it necessarily presupposes this very notion: 

\begin{quote}
    \lq And I became firmly convinced (in agreement with Poincar\'e, whose philosophical position I share in so few other respects) that the {\it idea of iteration}, i.e., {\it of the sequence of natural numbers, is an ultimate foundation of mathematical thought }- in spite of Dedekind's \lq\lq theory of chains'' which seeks to give a logical foundation for definition and inference by complete induction without employing our intuition of the natural numbers. For if it is true that the basic concepts of set theory can be grasped only through this \lq\lq pure'' intuition, it is unnecessary and deceptive to turn around them and offer a set-theoretic foundation for the concept of \lq\lq natural number''.' ({\it ibid.}, p.48)
\end{quote}

Weyl's conception of set bears some resemblance to the way sets are understood in  Martin-L\"of's type theory ({\bf MLTT})  and also in both levels of the Minimalist Foundation ({\bf MF}), which will be presented in the next sections. In such theories, a set is determined by rules for forming its elements, equipped with an elimination rule that allows proving properties by induction over it and defining functions by recursion from it to another set, see \cite{nordstrom1990programming}.

 Furthermore, Weyl considered the notion of set (and function) underlying the classical foundation of analysis to be irremediably circular.
 Weyl noted that  many basic definitions of analysis (e.g. that of least upper bound) were actually circular, 
 and decided to reformulate analysis by avoiding such viciously circular definitions in favour of a predicative development.
 In particular, such a circularity clearly shows up in Dedekind's construction of the real numbers: real numbers defined in this way do not form an {\it extensionally determinate collection}\footnote{See \cite{DBLP:journals/bsl/Avron20, crosilla2023weyl} for a detailed analysis of this concept.}. Weyl elaborates on this point in a letter to H\"older, published as appendix in \cite{weyl1994continuum}: 

\begin{quote}
    \lq If $A$ is a property of properties, then one may form the property $P_A$ which belongs to an object $x$ if and only if there is a property constructed by means of these principles which belongs to $x$ and itself possesses the property $A$. That would be a blatant {\it circulus vitiosus}; yet our current version of analysis commits this error and I consider it ground for censure.' (\cite{weyl1994continuum}, p.113)
\end{quote}

In particular, it follows from this quotation that the power of a set is not a set according to Weyl. Thus we cannot define a new set by quantifying over the powerset to which it should belong. The subtle point is that this does not mean that the power of a set does not exist, but simply that it is not a set but a collection that is not inductively generated.

In the Minimalist Foundation, this distinction between sets and collections is explicitly assumed. Accordingly, in Weyl's work collections exceed the power-collection of a set, as shown in \cite{adams2010weyl, DBLP:journals/apal/AdamsL10, DBLP:journals/bsl/Avron20}, but they are not allowed to enter into the definition of new sets.

A major distinction between sets and collections in the Minimalist Foundation is that, whilst both entities are generated by
rules forming their canonical elements as sets in Martin-L\"of's type theory, sets distinguish from collections for being equipped with an
induction principle and  a recursive definition of functions from them toward any other entity. Collections instead do not enjoy any induction principle
or recursive definition from them, since inductive or recursive definitions acting on them would not be {\it invariant}
under extensions of the theory with new sets. This approach aligns closely with the idea that predicative sets should be invariant, which traces back to Poincar\'e and has been carefully analyzed in \cite{Crosilla2022}.

Both in {\it Das Kontinuum} and in the Minimalist Foundation real numbers provide an example of a collection which is not a set. In particular, Weyl introduced real numbers  as Dedekind cuts of rationals that are formalized as quadruples of natural numbers. The fact that the resulting collection $\mathbb{R}$ of real numbers is not {\it extensionally determinate} in Weyl's terminology, means that {\it we cannot define new sets by quantification over} $\mathbb{R}$, in sharp contrast with the case of natural numbers. This implies, for instance, that the least upper bound principle for sets of real numbers is not predicatively provable, while the principle is provable for sequences of real numbers, since the latter do not involve quantification over $\mathbb{R}$.

\noindent Finally, we want to point out that Weyl in {\it Das Kontinuum} places a significant emphasis on his notion of function and contrasts it with the traditional set-theoretical definition (see \cite{weyl1994continuum}, pp.33-34). According to Avron's analysis in \cite{DBLP:journals/bsl/Avron20}, for Weyl, a function is always determined by a rule. In other words, a function cannot be given by an arbitrary relation satisfying the functionality condition, as in set theory; rather, there are syntactic constraints on the definability of the relation that must be respected. 

\noindent  In this regard,
it might be intriguing to relate Weyl's conception of function as a rule-based object to a computational understanding of functions. 

Weyl's insights about functions as rule-based objects align, in some way, with the fact that the Minimalist Foundation distinguishes the notion of function as functional relation from that of function as a primitive operation defined by a $\lambda$-term, resembling the Brouwer's notion of  lawlike sequence (see \cite{sambin2008continuity}).
The distinct character of these two notions of function is a peculiarity of {\bf MF} among foundations for constructive predicative mathematics.  While it might be argued that Weyl foreshadowed this distinction, nevertheless his technical treatment of the notion of function is not exactly the same. 

 There have been various attempts in the literature to formalize Weyl's system in a modern fashion, since Weyl's original presentation presents some ambiguities. An influential reconstruction of Weyl's system has been carried out by Feferman in several works \cite{feferman1988weyl, feferman1998light, feferman2000significance}. Feferman has extensively argued that Weyl's system should correspond to the theory $\mathbf{ACA}_0$ or, if the full induction scheme is accepted, to $\mathbf{ACA}$, which are well-known subsystems of second-order arithmetic \cite{simpson2009subsystems}. But such a reconstruction 
has been criticized by Adams and Luo in \cite{adams2010weyl, DBLP:journals/apal/AdamsL10}, and more recently by Avron in \cite{DBLP:journals/bsl/Avron20}. They observed that in certain sections of {\it Das Kontinuum}, Weyl introduces non-arithmetical sets and makes use of induction for non-arithmetical formulas. Therefore, they concluded that Weyl's system should be stronger than Feferman's reformulation of Weyl's system, even if ${\bf ACA}_0$ should be sufficient to reconstruct predicative analysis as developed in {\it Das Kontinuum}. 

The systems introduced in the above-mentioned papers 
are many-sorted logics axiomatized over a dependent type theory, in a way similar to both levels of the Minimalist Foundation. They all differ in the way 
 exponentiation of functions is treated.
Indeed,
 Adams and Luo's system in \cite{adams2010weyl, DBLP:journals/apal/AdamsL10}  includes some exponentiation of $\lambda$-functions in the form of function types, which are treated as {\it large types} in the sense that they are not contained in the universe of small types, that in turn can be thought of as the universe of sets. Instead, Avron in \cite{DBLP:journals/bsl/Avron20} restricts the possibility of forming exponentials by assuming that only certain types might serve as codomains by not allowing to define sets by quantification over functions.  On the contrary, in {\bf MF} there is no exponentiation of functions between collections
 but only sets exponentiating $\lambda$-functions between sets.

\section{Constructive Predicative Theories}\label{sec3}

\noindent Among the most popular frameworks introduced as foundations for Bishop's constructive mathematics there are  Constructive Zermelo-Fraenkel set theory $\mathbf{CZF}$ \cite{aczel78} and  Martin-L\"of Type Theory \mltt\  \cite{nordstrom1990programming}. More recently, an extension of the latter called Homotopy Type Theory \hott\ \cite{hottbook} has gained much attention, especially for its applications to synthetic homotopy theory and higher categories. All these systems are examples of constructive and predicative theories. However, the predicativity of these theories is closely tied to the adoption of a constructive logic. Indeed, whenever they are extended with the law of excluded middle or equivalent classical principles, they become impredicative and thus they are not compatible with classical predicativity à la Weyl. Briefly, the issue is that such theories are closed under the {\it exponentiation of functional relations} and the two-element set that, when classical logic is added, are strong enough to allow for impredicative constructions. Therefore, we cannot establish the predicativity of their classical counterparts, as it happens instead in the case of Heyting arithmetic  or the Minimalist Foundation, which we will introduce in detail in Section \ref{sec6}. 

\subsection{Aczel's Constructive Zermelo-Fraenkel set theory}
\noindent {\bf CZF} \cite{aczel78} is a theory formulated in the same language of classical {\bf ZF} and based on intuitionistic logic. The theory includes the following axioms: {\it extensionality}, {\it pairing}, {\it union}, {\it restricted separation}, {\it infinity}, {\it  $\in$-induction}, {\it strong collection}, and {\it subset collection}.

The predicative nature of the theory is reflected in the restriction of {\it separation to $\Delta_0$-formulas}, which means that the defining formula $\phi$ might quantify only over sets that have already been introduced, and in the rejection of the full powerset axiom. The powerset axiom is replaced by the weaker axiom of subset collection, which in turn implies the exponentiation axiom for functions. Functions are defined as functional relations in the standard set-theoretic manner.

We recall that the Axiom of Choice in the form

\begin{equation*}
 ({\bf AC})\    \forall_{(x \in A)}\ \exists_{(y \in B)}\ R(x,y)\ \rightarrow\ \exists_{(f \in A \rightarrow B)}\ \forall_{(x \in A)}\ R(x,f(x))
\end{equation*}
where $A$ and $B$ are sets and $R$ any relation on them, 
cannot be added to {\bf CZF} while preserving its constructive character: 

\begin{theorem}
    {\bf CZF} + {\bf AC} derives the law of excluded middle $\phi \vee \neg \phi $ for $\Delta_0$-formulas $\phi$.
\end{theorem}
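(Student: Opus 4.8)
The plan is to show that {\bf CZF} + {\bf AC} proves excluded middle for bounded formulas by a Diaconescu-style argument, adapted to the predicative setting so that it only invokes instances of restricted separation. Fix a $\Delta_0$-formula $\phi$. The idea is to carve out two subsets of a fixed two-element set and use {\bf AC} to choose elements from them, deriving that they are either equal or distinct, which by extensionality translates back into a decision of $\phi$.

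Concretely, I would work with the set $2 = \{0,1\}$ (available from infinity together with pairing, union and restricted separation) and form
\begin{equation*}
 U \ourdef \{\, x \in 2 \mid x = 0 \ \vee\ \phi \,\}, \qquad V \ourdef \{\, x \in 2 \mid x = 1 \ \vee\ \phi \,\}.
\end{equation*}
Since $\phi$ is $\Delta_0$, both defining formulas are $\Delta_0$, so these are legitimate instances of restricted separation — this is the crucial point that keeps the argument predicative. Next I would observe that $0 \in U$ and $1 \in V$, hence $\forall_{(z \in \{U,V\})}\, \exists_{(x \in 2)}\, x \in z$, so {\bf AC} (with $R(z,x) \equiv x \in z$) yields a function $f$ with $f(U) \in U$ and $f(V) \in V$. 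By the defining formulas, $f(U) \in 2$ and $f(V) \in 2$, and since equality on $2$ is decidable one has $f(U) = f(V) \ \vee\ f(U) \neq f(V)$.

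In the first case, $f(U) = f(V)$: since $f(U) \in U$ and $f(V) \in V$, unfolding gives $(f(U) = 0 \vee \phi)$ and $(f(U) = 1 \vee \phi)$; if $\phi$ fails in both disjunctions then $f(U) = 0$ and $f(U) = f(V) = 1$, contradicting $0 \neq 1$, so $\phi$ holds. In the second case, $f(U) \neq f(V)$: if $\phi$ held, then by extensionality $U = V = 2$, whence $f(U) = f(V)$, a contradiction; so $\neg\phi$. Either way we conclude $\phi \vee \neg\phi$.

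I do not expect a genuine obstacle here — the classical Diaconescu argument goes through essentially verbatim. The one point that needs care, and the only place where the hypotheses of {\bf CZF} are really used beyond pure logic, is checking that the two comprehension terms $U$ and $V$ fall within {\em restricted} separation; this is exactly why the conclusion is limited to $\Delta_0$-formulas $\phi$ and cannot be pushed to arbitrary $\phi$ without the full separation axiom. Everything else is elementary manipulation with the two-element set, decidability of its equality, and extensionality.
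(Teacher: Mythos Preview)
Your argument is correct and is precisely the Diaconescu--Goodman--Myhill argument that the paper defers to by citing Aczel--Rathjen; in particular, your observation that restricting to $\Delta_0$-formulas is exactly what keeps $U$ and $V$ within the scope of restricted separation is the key point. One small cosmetic remark: in the case $f(U)=f(V)$ your phrasing ``if $\phi$ fails in both disjunctions'' reads classically, but the intended constructive case-split on the two disjunctions $(f(U)=0\vee\phi)$ and $(f(U)=1\vee\phi)$ yields $\phi$ in all four subcases, so the step is fine.
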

\begin{proof}
    See \cite{AczelRath}.
\end{proof}
\begin{remark}
    In \cite{AczelRath}, it is shown that this proof can be carried out in a weaker subsystem of {\bf CZF}. On the other hand, the weaker principles of {\it countable choice} and {\it dependent choice} can be added to {\bf CZF} without leading to undesirable results.
\end{remark}
Moreover, in {\bf CZF} real numbers form a set:
\begin{theorem}
Real numbers both as Dedekind cuts or as Cauchy's sequences form a set in {\bf CZF}.
\end{theorem}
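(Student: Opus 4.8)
The plan is to handle the Cauchy and Dedekind presentations separately, using that, although {\bf CZF} lacks the powerset axiom, it does prove Exponentiation (a consequence of Subset Collection) and, more strongly, the Fullness axiom --- equivalent over the rest of {\bf CZF} to Subset Collection --- together with Replacement (a consequence of Strong Collection) and Restricted ($\Delta_0$) Separation. Throughout, $\mathbb{N}$ and $\mathbb{Q}$ are available as sets from Infinity and the usual finite constructions; the whole point of the argument is to keep every auxiliary collection inside a genuine set by never quantifying over $\mathcal{P}(\mathbb{Q})$.

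For the Cauchy reals the argument is routine and does not even require Fullness. Take a Cauchy real to be an equivalence class of Cauchy sequences of rationals, where $s\colon\mathbb{N}\to\mathbb{Q}$ is Cauchy iff $\forall k\in\mathbb{N}\,\exists N\in\mathbb{N}\,\forall m,n\ge N\;|s_m-s_n|<\tfrac1{k+1}$, and two such sequences are equivalent iff their difference tends to $0$. The set $\mathbb{Q}^{\mathbb{N}}$ of all rational sequences is a set by Exponentiation; the predicates ``$s$ is Cauchy'' and ``$s\sim t$'' are $\Delta_0$, since all their quantifiers range over the sets $\mathbb{N}$ and $\mathbb{Q}$. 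Hence the Cauchy sequences form a set $S$ by Restricted Separation, each class $[s]=\{t\in S:s\sim t\}$ is a set, and $S/{\sim}=\{[s]:s\in S\}$ is a set as the Replacement-image of $S$ along $s\mapsto[s]$.

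The Dedekind reals are the substantive case, since a priori a located cut is a pair of subsets of $\mathbb{Q}$ and $\mathcal{P}(\mathbb{Q})$ is not a set. The idea is to use locatedness to code each cut by a small piece of data that Fullness can then collect into a single set. To a Dedekind real $x$ with cut $(L_x,U_x)$ associate the relation $\rho_x\subseteq\mathbb{Q}^{<}\times\mathbf 2$ over the set $\mathbb{Q}^{<}=\{(p,q)\in\mathbb{Q}^2: p<q\}$, given by $((p,q),0)\in\rho_x \Leftrightarrow p\in L_x$ and $((p,q),1)\in\rho_x \Leftrightarrow q\in U_x$; locatedness says exactly that $\rho_x$ is a total relation from $\mathbb{Q}^{<}$ to $\mathbf 2$. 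By Fullness there is one set $C$ of total relations from $\mathbb{Q}^{<}$ to $\mathbf 2$ such that every such total relation --- in particular every $\rho_x$ --- contains some $\sigma\in C$. The crucial point is that such a $\sigma\subseteq\rho_x$ is itself total, so for every $(p,q)\in\mathbb{Q}^{<}$ one has $((p,q),0)\in\sigma$ (whence $p\in L_x$) or $((p,q),1)\in\sigma$ (whence $q\in U_x$); using openness of $L_x$ and $U_x$ one checks that the pair $(L_\sigma',U_\sigma')$ with $L_\sigma'=\{p\in\mathbb{Q}:\exists q\in\mathbb{Q}\,(p<q\wedge((p,q),0)\in\sigma)\}$ and $U_\sigma'$ symmetric recovers $(L_x,U_x)$ exactly: the inclusions $L_\sigma'\subseteq L_x$ and $U_\sigma'\subseteq U_x$ are immediate, while if $p<p'<x$ with $p'\in L_x$ then totality of $\sigma$ at $(p,p')$ forces $((p,p'),0)\in\sigma$, since $((p,p'),1)\in\sigma$ would give $x<p'$. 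Hence the class of Dedekind reals embeds, via $x\mapsto(L_\sigma',U_\sigma')$ for a witnessing $\sigma$, into the Replacement-image of $C$ along $\sigma\mapsto(L_\sigma',U_\sigma')$, which is a set; and since ``being a located cut'' is $\Delta_0$ in $(L,U)$ --- inhabitedness, openness, disjointness and locatedness all quantify only over $\mathbb{Q}$ --- Restricted Separation carves the Dedekind reals out of that set.

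I expect the genuine obstacle to be exactly the Dedekind step: settling on the right finitary coding of a cut (the relation $\rho_x$ over $\mathbb{Q}^{<}$, with locatedness serving as its totality witness) and then checking that a Fullness-bounded sub-relation $\sigma$ still carries enough information to reconstruct the cut --- the nontrivial inclusion above is where openness does essential work. A secondary but unavoidable point is that every definition invoked (the Cauchy condition, $\sim$, the recovery formulas, and ``is a located cut'') must be verified to be $\Delta_0$, or at least to have all its unbounded quantifiers eliminable, so that Restricted Separation legitimately applies; this is routine but must not be skipped, as it is precisely the place where predicativity is doing the work. Once the Dedekind case is settled one could alternatively obtain the Cauchy case from it, the Cauchy reals being a $\Delta_0$-definable subclass of the Dedekind reals, but the direct quotient argument above is cleaner.
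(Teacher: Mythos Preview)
Your proof is correct and substantially more detailed than what the paper provides: the paper's ``proof'' is simply a reference to Aczel--Rathjen, with no argument given. Your approach --- Exponentiation plus $\Delta_0$-Separation and Replacement for the Cauchy reals, and the Fullness trick (coding a located cut as a total relation on $\mathbb{Q}^{<}\times\mathbf{2}$, pulling a witnessing sub-relation from a full set, and reconstructing the cut from it using openness and disjointness) for the Dedekind reals --- is exactly the standard argument found in that reference, so there is no methodological divergence to report.

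One small remark on presentation: in the reconstruction step you write that $((p,p'),1)\in\sigma$ ``would give $x<p'$''; more precisely it gives $p'\in U_x$, which together with $p'\in L_x$ contradicts disjointness rather than an order relation with the abstract point $x$. The logic is the same, but phrasing it via disjointness keeps the argument internal to the cut data. Also, your closing aside that the Cauchy reals could be recovered as a $\Delta_0$-subclass of the Dedekind reals is slightly loose --- the canonical map sends a Cauchy real to a Dedekind real, and its image is $\Delta_0$-definable, but identifying the Cauchy reals with that image requires checking injectivity; since you rightly prefer the direct quotient argument anyway, this does not affect the proof.
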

\begin{proof}
    See \cite{AczelRath}.
\end{proof}

\subsection{Martin-L{\"o}f's type theory}
Martin-L{\"o}f Type Theory (\mltt) \cite{nordstrom1990programming} is a foundation for constructive mathematics, which was explicitly introduced to formalize Bishop's mathematics \cite{ml1975}. The theory is not presented in the usual language of first-order logic, but it is formulated in the language of dependent type theory.

Types of \mltt\ are introduced through the following kinds of rules: formation, introduction, elimination, and computation. In particular, the constructors of a given type are specified by the introduction rules, while the elimination rule states simultaneously a principle of induction at the level of types  and of definition by recursion at the level of terms with the aid of the computation rules. The predicative nature of the theory is related to the fact that all the types, also called {\it sets}, are inductively generated by the relevant rules. 
Two fundamental aspects of \mltt\ are the presence of a {\it primitive notion of function}, called here {\it $\lambda$-function} to disambiguate it  from the notion of function as {\it functional relation}, and the {\it identification between propositions and types}. $\lambda$-functions are represented as $\lambda$-terms and this differentiates this foundational theory from other theories such as {\bf CZF}. The identification between propositions and types makes the constructive logic of \mltt\ stronger than usual intuitionistic logic. This additional strength is visible in the case of the existential quantifier that is identified with the dependent sum type $\Sigma$. The elimination rule for this type is strong enough to allow for the definition of projection operators\footnote{We recall that the canonical elements of $\Sigma$-types given by the introduction rule are pairs.}. These projections play a fundamental role in establishing the following fact: 

\begin{theorem}
    The type-theoretic Axiom of Choice 
\begin{equation*}
    ({\bf AC})\ \Pi_{(x:A)}\ \Sigma_{(y:B)}\ R(x,y)\ \rightarrow \Sigma_{(f: A \rightarrow B)}\ \Pi_{(x:A)}\ R(x,f(x))
\end{equation*}
where $A$ and $B$ are  sets (i.e. types) and $R$ any relation on them, 
    is derivable in \mltt.
\end{theorem}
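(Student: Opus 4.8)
The plan is to construct the choice function directly from the hypothesis using the projection operators associated with the elimination rule of the $\Sigma$-type, exploiting the fact that in \mltt\ the proof of an existential statement literally carries a witness. Since \mltt\ identifies propositions with types, a term of type $\Pi_{(x:A)}\ \Sigma_{(y:B)}\ R(x,y)$ is a genuine $\lambda$-function producing, for each $x:A$, a pair whose first component is an element of $B$ and whose second component is a proof that $R$ holds of $x$ and that element. The projections $\fpr$ and $\spr$ then let us separate these two data, and the $\Pi$-type closure (i.e.\ $\lambda$-abstraction) lets us reassemble them into the required pair.

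Concretely, I would proceed as follows. First, assume $h : \Pi_{(x:A)}\ \Sigma_{(y:B)}\ R(x,y)$. For each $x:A$ we have $h(x) : \Sigma_{(y:B)}\ R(x,y)$, so by the first projection $\fpr(h(x)) : B$; abstracting, set $f \ourdef \lambda x.\, \fpr(h(x))$, which has type $A \rightarrow B$. Second, for each $x:A$ the second projection gives $\spr(h(x)) : R(x, \fpr(h(x)))$, i.e.\ $\spr(h(x)) : R(x, f(x))$ after unfolding the definition of $f$ via the computation rule for $\fpr$ on a canonical pair; abstracting again, $\lambda x.\, \spr(h(x)) : \Pi_{(x:A)}\ R(x,f(x))$. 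Third, pairing these two terms with the introduction rule of $\Sigma$ yields an element of $\Sigma_{(f:A\rightarrow B)}\ \Pi_{(x:A)}\ R(x,f(x))$. Finally, $\lambda$-abstracting over the assumed $h$ produces the desired term inhabiting $({\bf AC})$.

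The step deserving most care — the only place where anything subtle happens — is the verification that $\spr(h(x))$ really has type $R(x, f(x))$ and not merely $R(x, \fpr(h(x)))$ with $f$ opaque. This rests on the definitional equality $f(x) \equiv \fpr(h(x))$, which holds by $\beta$-reduction of the $\lambda$-abstraction defining $f$; one must also invoke the type-dependency of the second component of the $\Sigma$-type, namely that in $\Sigma_{(y:B)}\ R(x,y)$ the type of the second projection is $R(x,-)$ evaluated at the first projection. Both are standard consequences of the elimination and computation rules for $\Sigma$, but it is worth stating explicitly since it is exactly the feature — strong (type-theoretic) existential elimination yielding projections — that distinguishes \mltt\ from set-theoretic frameworks like {\bf CZF}, where no such term can be built and where, as recalled above, {\bf AC} is outright incompatible with the intended logic. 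There is no genuine obstacle here: the proof is essentially a one-line term, and the ``hard part'' is purely expository, making sure the reader sees which rule of \mltt\ is doing the work.
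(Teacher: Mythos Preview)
Your argument is correct and is exactly the standard construction the paper has in mind: the surrounding text explicitly says that the projection operators obtained from the strong elimination rule of the $\Sigma$-type ``play a fundamental role in establishing'' this fact, and your term $\lambda h.\,\langle\, \lambda x.\,\fpr(h(x)),\ \lambda x.\,\spr(h(x))\,\rangle$ is precisely the intended inhabitant. The paper does not spell out the proof further than this hint, so your write-up is, if anything, more detailed than what the paper provides.
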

We remark that the rules for $\Sigma$-type are strong enough to derive also the corresponding {\it choice rule}. Furthermore, the Axiom of Unique Choice {\bf AC!}, which identifies {\it functional relations} with  {\it $\lambda$-functions},
\begin{equation*}\label{unique}
 ({\bf AC!})\    \Pi_{(x:A)}\ \Sigma !_{(y:B)}\ R(x,y)\ \rightarrow\ \Sigma_{(f: A \rightarrow B)}\ \Pi_{(x:A)}\ R(x,f(x))
\end{equation*}
\vspace{0.3em}
(where $\Sigma !_{(y:B)}\ R(x,y)\ \ourdef\ \Sigma_{(y: B)}\ R(x,y)\ \times\ \Pi_{( y_1, y_2 :B)}\ (R(x,y_1)\ \times\ R(x,y_2))\ \rightarrow\ \mathsf{Id}(B, y_1, y_2)$) 
also follows from the validity of {\bf AC}.

Henceforth, we can refer in \mltt\  with the name {\it function}  to either a {\it functional relation} or a {\it $\lambda$-function}  without ambiguity. As anticipated in section~\ref{sec2}, the two notions of function are instead kept distinct in the Minimalist Foundation as well as in some modern presentations of Weyl's system as those given in \cite{DBLP:journals/apal/AdamsL10, DBLP:journals/bsl/Avron20}.

Due to the absence of quotient sets in {\bf MLTT}, real numbers, both à la Dedekind or à la Cauchy, are formally represented there using {\it setoids} whose elements are then indexed by a set. A setoid \cite{palmgren2005bishop}, called {\it set}
by Bishop in (\cite{bishop1967}, p. 13), is a pair  $(A, =_A)$,  consisting of a set 
called {\it support} equipped with a suitable equivalence relation $a=_A a' \ [a,a': A]$ . 

For example, Bishop reals in \cite{bishop1967} can be
represented as the setoid whose support is given by  the set of Cauchy regular sequences 
of rational numbers equipped with the equivalence relation $=_{\mathcal{R_{C}}}$ as in (\cite{bishop1967}, pp.15-16).
Bishop defined them by using the notion of {\it operation} which we can identify in \mltt\  with  the notion of $\lambda$-function, i.e.  with a $\lambda$-term of {\bf MLTT}.
Moreover, for what we have said about the notion of function in \mltt,
 {\it Cauchy reals} defined as Cauchy functional relations can be identified  in \mltt\ with Bishop reals. For unifying the various concepts, we name them {\it Cauchy sequences à la Bishop}.


\noindent Instead, in \mltt\ Dedekind reals can be defined only relatively to a universe of sets  $U_k$ (indexed by a natural number $k$) and are represented by a setoid whose support is given by the set of Dedekind cuts on the rationals relative to $U_k$ as follows: 
\begin{definition}\label{dedcut}
    A Dedekind cut on the rationals is a pair $(L,U)$ of subsets $L,U \subseteq \mathbb{Q}$ that satisfy the usual conditions of being inhabited, disjoint, open, monotone, and located, all defined as in \cite{maiettisambinhand}.
\end{definition}

Notice that the subsets $L,U$ are defined as elements of the setoid of  propositional functions $L,U: \mathbb{Q} \rightarrow \mathcal{U}_k$ for some universe $\mathcal{U}_k$  quotiented under equiprovability.  Therefore, \lq $a\ \varepsilon\ L$' means that there exists a proof-term of $L(a)$.

\begin{definition}
We denote with $\mathbb{R}_{D_{k}}$  the support of Dedekind cuts made
of propositional functions with values in the universe $U_k$ for any natural number $k$.
\end{definition}

Again thanks to the validity in \mltt\ of  choice principles on natural numbers and  thanks to the exponentiation of $\lambda$-terms by means of the dependent product type $\Pi_{(x:A)}B(x)$ of any family of types
$B(x) (x:A)$, in
{\bf MLTT} each real number à la Dedekind in the form of a Dedekind cut corresponds bijectively
to a Bishop real and hence to a Dedekind cut in $\mathbb{R}_{D_{0}}$. 

\begin{proposition}\label{bijdedcau}
    In \mltt, Dedekind reals $\mathbb{R}_{D_{0}}$ are in bijective correspondence with the Cauchy reals, defined as Cauchy sequences à la Bishop.
\end{proposition}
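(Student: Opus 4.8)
The plan is to construct the bijection in two stages, moving through a normal form for Dedekind cuts. First I would recall that a Dedekind real in $\mathbb{R}_{D_0}$ is (the class of) a pair $(L,U)$ of propositional functions $\mathbb{Q}\to\mathcal{U}_0$ satisfying the conditions of Definition \ref{dedcut}, and that a Cauchy real in the sense of the proposition is (the class of) a Cauchy-regular sequence $s:\mathbb{N}\to\mathbb{Q}$, i.e.\ a $\lambda$-term, under the Bishop equivalence $=_{\mathcal{R}_C}$. The forward map $\Phi$ sending a Cauchy sequence to a Dedekind cut is straightforward and uses only a decidable rational comparison: set $q\,\varepsilon\,L$ iff $\exists_{(n:\mathbb{N})}\ q + 2^{-n} < s(n)$, and dually for $U$; one checks the five cut conditions, the only slightly delicate one being locatedness, which follows because $q<r$ in $\mathbb{Q}$ is decidable and the regularity modulus lets one separate $q$ and $r$ by choosing $n$ with $2^{-n}$ smaller than $(r-q)/3$. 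This map respects both equivalence relations, so $\Phi$ is well defined on equivalence classes.

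The core of the argument is the backward direction: from a Dedekind cut $(L,U)$ one must \emph{extract} a Cauchy sequence of rationals, and this is exactly where the choice principles of \mltt\ enter. For each $n:\mathbb{N}$, locatedness applied to a suitable finite subdivision of an interval $[a,b]$ known to straddle the cut yields, for every pair $q<r$ with $r-q$ small, the disjunction $q\,\varepsilon\,L \lor r\,\varepsilon\,U$; iterating a bisection search over such pairs produces, \emph{for each} $n$, the \emph{existence} of a rational $q_n$ with $q_n\,\varepsilon\,L$ and $q_n + 2^{-n}\,\varepsilon\,U$. To turn this family of existence statements, one per natural number, into an actual $\lambda$-term $n\mapsto q_n$, one invokes number-theoretic countable choice, which is valid in \mltt\ by the derivability of the choice rule from the $\Sigma$-elimination rules (as recalled after the statement of the type-theoretic {\bf AC}); in fact, since the witness at each stage can be pinned down to a canonical one by a minimization over an enumeration of $\mathbb{Q}$, even the weaker unique-choice principle {\bf AC!} suffices. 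One then verifies that $(q_n)_n$ is Cauchy-regular — consecutive terms differ by at most $2^{-n}+2^{-(n+1)}$ since $q_n\,\varepsilon\,L$, $q_m\,\varepsilon\,L$ and the upper witnesses force them close — giving a Cauchy real $\Psi(L,U)$.

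It then remains to show $\Phi$ and $\Psi$ are mutually inverse on equivalence classes. For $\Psi\circ\Phi$: starting from a Cauchy sequence $s$, the extracted sequence $q_n$ satisfies $|q_n - s(m)|$ small for large $m$ by construction of $\Phi(s)$, so $q_n =_{\mathcal{R}_C} s$. For $\Phi\circ\Psi$: given $(L,U)$, one shows the cut built from $(q_n)_n$ has the same lower part as $L$ up to equiprovability — if $q\,\varepsilon\,L$ then, by openness, some $q'>q$ lies in $L$, and for $n$ large the extracted $q_n$ exceeds $q$ and lies in $L$, giving $q+2^{-n}<q_n \le$ (a point of $L$), hence $q$ is in the new lower cut; the converse uses disjointness of $L$ and $U$ together with $q_n+2^{-n}\,\varepsilon\,U$. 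The main obstacle is precisely the passage from a pointwise-over-$\mathbb{N}$ family of existence proofs to a single $\lambda$-term, which has no purely logical justification and genuinely relies on {\bf AC!}\ (equivalently countable choice) being available in \mltt; this is the feature the paper has isolated as the source of incompatibility with Weyl-style predicativity, and it is exactly what will fail in \mf. A secondary technical point is the careful bookkeeping of the regularity modulus so that all the ``small'' estimates line up; I would handle this by fixing the modulus $2^{-n}$ uniformly throughout rather than carrying an abstract modulus.
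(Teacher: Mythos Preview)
Your proposal is correct and follows essentially the same argument the paper defers to via citation (Prop.~17 in \cite{CN95} and \S4.3 of \cite{MSpointfree}): extract a Cauchy sequence from a located cut using the derivable type-theoretic {\bf AC}, and send a Cauchy sequence back to the cut it determines; the paper's own Proposition~\ref{dedres} displays the same construction with modulus $2/n$ in place of your $2^{-n}$. One small overclaim: your aside that {\bf AC!} alone suffices via minimization over an enumeration of $\mathbb{Q}$ tacitly assumes decidability of $q\,\varepsilon\,L$, which is not part of the data of a cut---but this is harmless here, since full {\bf AC} is already available in \mltt.
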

\begin{proof}
See  Prop.17 in \cite{CN95} and section 4.3 of \cite{MSpointfree}. 
\end{proof}

As a consequence, we deduce that:

\begin{proposition}\label{dedres}
 In \mltt,  the set $\mathbb{R}_{D_{0}}$ of Dedekind reals is   in bijective correspondence with the set of Dedekind reals $\mathbb{R}_{D_{k}}$  for any natural number $k$.
\end{proposition}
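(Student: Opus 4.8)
The plan is to factor the desired bijection through the Cauchy reals, so that the statement reduces to Proposition~\ref{bijdedcau}. Concretely, I would show that for every natural number $k$ the set $\mathbb{R}_{D_k}$ is in bijective correspondence with the Cauchy sequences à la Bishop, by a construction that is uniform in $k$, and then compose with the bijection $\mathbb{R}_{D_0}\cong\mathcal{R}_C$ already established in Proposition~\ref{bijdedcau}. Equivalently, the claim amounts to proving that the canonical map $\iota_k\colon \mathbb{R}_{D_0}\to\mathbb{R}_{D_k}$ induced on propositional functions by the cumulativity inclusion $\mathcal{U}_0\hookrightarrow\mathcal{U}_k$ is a bijection of setoids. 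Note that $\iota_k$ is automatically injective, since equiprovability is plain logical biimplication and does not depend on the universe level; the whole content is its surjectivity up to setoid equality.

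First I would define the forward map $\Phi_k\colon \mathbb{R}_{D_k}\to\mathcal{R}_C$ exactly as at level $0$: given a Dedekind cut $(L,U)$ with $L,U\colon\mathbb{Q}\to\mathcal{U}_k$, use locatedness together with number-theoretic choice to pick, for each $n$, rationals $x_n$ with $x_n\,\varepsilon\,L$, $y_n$ with $y_n\,\varepsilon\,U$, and $y_n-x_n<1/n$; the $x_n$ form a Cauchy regular sequence of rationals whose $=_{\mathcal{R}_C}$-class depends only on the order relation between rationals and the cut, so $\Phi_k$ descends to the quotient by equiprovability. This construction uses only quantification over $\mathbb{N}$ and $\mathbb{Q}$ and the exponentiation of $\lambda$-terms via $\Pi$-types, hence nothing about it changes with $k$. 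Next I would define the backward map $\Psi_k\colon\mathcal{R}_C\to\mathbb{R}_{D_k}$: given a Cauchy sequence, its associated cut $(L,U)$ has membership predicates expressible by arithmetical formulas (existential quantification over $\mathbb{N}$ of decidable rational inequalities), so $L$ and $U$ may already be taken as propositional functions valued in the smallest universe $\mathcal{U}_0$ and then lifted into $\mathcal{U}_k$; thus $\Psi_k=\iota_k\circ\Psi_0$.

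It then remains to check the two round-trips. The composite $\Phi_k\circ\Psi_k$ is the identity of $\mathcal{R}_C$ by exactly the computation used at level $0$. The composite $\Psi_k\circ\Phi_k$ is the identity of $\mathbb{R}_{D_k}$ because any Dedekind cut $(L,U)$ valued in $\mathcal{U}_k$ is equiprovable with the cut reconstructed from its rational approximation: for each rational $q$ one shows $q\,\varepsilon\,L$ if and only if $q$ lies to the left of some approximating rational $x_n$, using the openness, monotonicity and locatedness conditions on $(L,U)$. Combining $\Phi_k$ and $\Psi_k$ with the bijection of Proposition~\ref{bijdedcau} then yields $\mathbb{R}_{D_0}\cong\mathcal{R}_C\cong\mathbb{R}_{D_k}$, as required; in the $\iota_k$ formulation, $\Psi_0\circ\Phi_k$ is the required section of $\iota_k$.

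The main obstacle — and the point where the argument genuinely uses that we work in \mltt\ rather than in a weaker predicative setting — is the verification that $\Psi_k\circ\Phi_k$ is the identity, i.e.\ that passing a higher-universe cut through its arithmetical rational approximation recovers an equiprovable cut. This is precisely the step that rests on locatedness together with the number-theoretic choice principles available in \mltt, which are what allow the "size" of the cut's truth values to collapse down to $\mathcal{U}_0$; without them (as in \mf) the map $\iota_k$ need not be surjective. Once this lemma is established, everything else is routine bookkeeping about setoids, equiprovability, and universe lifting.
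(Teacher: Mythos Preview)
Your proposal is correct and follows essentially the same route as the paper: both arguments pass from a cut in $\mathbb{R}_{D_k}$ to a Bishop real via locatedness plus number-theoretic choice, rebuild an arithmetical cut lying in $\mathbb{R}_{D_0}$, and then verify that this reconstructed cut is equiprovable with the original. The paper presents this more tersely (it just writes down the rebuilt cut $(L_0,U_0)$ and appeals to the correspondence in \cite{CN95,MSpointfree} for the equiprovability), whereas you spell out the full setoid-bijection bookkeeping with $\Phi_k,\Psi_k,\iota_k$; but the mathematical core is the same.
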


\begin{proof}
   Given a Dedekind cut as in def.~\ref{dedcut} in $\mathbb{R}_{D_{k}}$  for any natural 
   number $k$, by using the axiom of choice we can define a Bishop real given by an operation
   $\lambda n. x_n $, usually written $(x_n)_{n}$, as in Prop. 17 in \cite{CN95} or section 4.3 in \cite{MSpointfree}, whose 
   cut $(L_0, D_0)$ in $\mathbb{R}_{D_{0}}$ is defined as

$$L_0\equiv \{ p : \mathbb{Q}\ \mid\ \Sigma_{n : \mathbb{N}}\ p\leq x_n-2/n \ \}\qquad\qquad  U_0\equiv \{ q: \mathbb{Q}\ \mid\ \Sigma_{n: \mathbb{N}}\ q\geq x_n +2/n \ \}$$
  
  \noindent and is propositionally equivalent to the starting cut $(L,U)$ in $\mathbb{R}_{D_{k}}$ thanks
   to the correspondence between Dedekind cuts, formal points of point-free topology of real numbers and Bishop reals in 
   section 4.3 of \cite{MSpointfree}.
\end{proof}

\begin{remark}
  The previous proposition resembles a special case of  Russell's Reducibility Axiom which constructively requires  a  choice principle to be derived.  If the law of excluded middle were valid the full Reducibility axiom would follow as shown in section 5. 
\end{remark}

\subsection{Homotopy Type Theory}
\noindent Homotopy Type Theory (\hott) \cite{hottbook} is an extension of \mltt\ and is an instance of Voevodsky's Univalent Foundations. It extends \mltt\ with the {\it Univalence Axiom} and {\it Higher Inductive types}, examples of which include quotients of homotopy sets and propositional truncation. The Univalence Axiom is formulated  as follows: 
\begin{equation*}
    \mbox{the map}\ \mathsf{idtoeqv}: (A =_{\mathcal{U}} B)\ \rightarrow\ (A \simeq B)\ \mbox{is an equivalence}
\end{equation*}
where $(A \simeq B)$ is the type of equivalences between $A$ and $B$. The map $\mathsf{idtoeqv}$ and the notion of equivalence $\simeq$ are defined as in Section 2 of \cite{hottbook}. Among the consequences of this axiom, there are certain extensional principles (e.g. function extensionality) that are not available in intensional \mltt. 

A significant feature of \hott\ is allowing for a representation of logical notions alternative to the propositions-as-types paradigm. The key notion here is that of {\it h-proposition}, where a type $P$ is a h-proposition whenever for all $x,y:P$, $x =_P y$ holds, and it is analogous to the notion of {\it mono type} in the internal theory of topos in \cite{Maietti05}. Further, a type $A$ is a {\it h-set} if the associated identity type $=_A$ is a h-proposition. 

The logic of h-propositions behaves differently from the propositions-as-types logic and is similar to the internal logic of elementary topoi or of regular categories as presented in \cite{Maietti05}. In particular, the existential quantifier is not identified with the $\Sigma$-type any longer. Rather, it is identified with the propositional truncation of the $\Sigma$:
\begin{equation*}
    \exists_{(x:A)}\ P(x)\ \ourdef\ \vert\vert\ \Sigma_{(x:A)}\ P(x)\ \vert\vert
\end{equation*}
Observe that the propositional truncation of a type is always a h-proposition. Indeed the constructors for the propositional truncation of a type $A$ are the following: if $a:A$, then $\vert\ a\ \vert: \vert\vert\ A\ \vert\vert$ and  $x =_{\vert\vert A\vert\vert} y$ holds for all $x,y: \vert\vert\ A\ \vert\vert$.
Moreover, it holds that if $P$ is a h-proposition, then it is equivalent to its truncation.

\noindent Observe also that  in \hott,  while it is obviously possible to derive the type-theoretic {\bf AC} under the identification 
propositions-as-types,
it is not instead possible to derive the type-theoretic {\bf AC} 
using the identification of propositions as h-propositions in the version:
\begin{equation*}
    \Pi_{(x:A)}\ \exists_{(y:B)}\ R(x,y)\ \rightarrow\ \exists_{(f: A \rightarrow B)}\ \Pi_{(x:A)}\ R(x,f(x))
\end{equation*}
with $A, B$ h-sets and $R(x,y)$ h-proposition for $x:A, y:B$, as shown in \cite{hottbook}.
Indeed, in \hott\ it is only possible to derive the axiom of unique choice {\bf AC!} formulated as follows 
\begin{equation*}
 ({\bf AC!})\    \Pi_{(x:A)}\ \exists !_{(y:B)}\ R(x,y)\ \rightarrow\ \Sigma_{(f: A \rightarrow B)}\ \Pi_{(x:A)}\ R(x,f(x))
\end{equation*}

\noindent
with $A, B$ types and $R(x,y)$ h-proposition and where unique existence $\exists !_{(y:B)} R(x,y)$ amounts to the following condition: 
\begin{equation*}
    \exists_{(y:B)}\ R(x,y)\ \times\ \Pi_{(y_1, y_2: A)}\ R(x,y_1)\ \land\ R(x,y_2)\ \rightarrow\ y_1 =_B y_2.
\end{equation*}

\begin{proposition}\label{unipri}
    {\bf AC!} can be derived in \hott.
\end{proposition}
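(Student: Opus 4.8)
The plan is to extract, for each $x:A$, an honest element of $B$ together with a proof of $R$ out of the given proof of unique existence, and then to collect these into a $\lambda$-function; the point is that the uniqueness clause of $\exists!$ turns the relevant $\Sigma$-type into an h-proposition, so that one is allowed to ``escape'' its propositional truncation.

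First I would fix $x:A$ and decompose the hypothesis: from a term of $\Pi_{(x:A)}\exists!_{(y:B)}R(x,y)$ one obtains, for each $x$, both a term $p_x:\vert\vert\,\Sigma_{(y:B)}R(x,y)\,\vert\vert$ and a term $u_x:\Pi_{(y_1,y_2:B)}\,R(x,y_1)\land R(x,y_2)\rightarrow y_1 =_B y_2$. Next I would prove that $\Sigma_{(y:B)}R(x,y)$ is an h-proposition: by the characterization of h-propositions as the types all of whose pairs of elements are connected by a path, it is enough to construct, for arbitrary $(y_1,r_1)$ and $(y_2,r_2)$ in this $\Sigma$-type, a path identifying them, and by the standard description of the identity types of a $\Sigma$-type such a path amounts to a pair consisting of some $q:y_1 =_B y_2$ and a path $q_{*}(r_1)=_{R(x,y_2)}r_2$. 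For the first component take $q\ourdef u_x(y_1,y_2,(r_1,r_2))$; the second component then exists for free because $R(x,y_2)$ is an h-proposition by hypothesis. Since $\Sigma_{(y:B)}R(x,y)$ is thus an h-proposition, the fact recalled above that an h-proposition is equivalent to its propositional truncation (equivalently, the recursion principle for truncation applied to the identity map) yields a map $\vert\vert\,\Sigma_{(y:B)}R(x,y)\,\vert\vert\rightarrow\Sigma_{(y:B)}R(x,y)$.

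Finally I would apply this map to $p_x$, obtaining $g(x):\Sigma_{(y:B)}R(x,y)$ for every $x:A$, and then set $f\ourdef\lambda x.\,\fpr(g(x)):A\rightarrow B$ and $e\ourdef\lambda x.\,\spr(g(x)):\Pi_{(x:A)}R(x,f(x))$, so that the pair $(f,e)$ inhabits $\Sigma_{(f:A\rightarrow B)}\Pi_{(x:A)}R(x,f(x))$, which is the conclusion of {\bf AC!}. I expect the only non-mechanical step to be the middle one --- verifying that the uniqueness clause together with $R$ being fibrewise an h-proposition forces $\Sigma_{(y:B)}R(x,y)$ to be an h-proposition; once this is in place, the rest is routine bookkeeping with $\Sigma$-types and with the elimination rule for propositional truncation.
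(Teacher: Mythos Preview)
Your argument is correct and is precisely the standard proof: the paper itself just cites \cite{rijke2015sets} for the proposition, but the subsequent Remark spells out exactly the idea you use, namely that the uniqueness clause (together with $R(x,-)$ being valued in h-propositions) forces $\Sigma_{(y:B)}R(x,y)$ to be a mono/h-proposition, whence it coincides with its truncation and one may project. So your proposal matches the paper's intended approach.
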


 \begin{proof}
    See \cite{rijke2015sets}.
\end{proof}

\begin{remark}
    Unique existence can be formulated in an equivalent way in \hott\ by employing the notion of contractible type. A type $A$ is said to be contractible if the following type is inhabited: 

    \begin{equation*}
        \Sigma_{(x:A)}\ \Pi_{(y:A)}\ x =_A y.
    \end{equation*}
Therefore, we can rewrite unique choice by requiring the contractibility of $\Sigma_{(y:B)}\  R(x,y)$. 
    
\end{remark}

\begin{remark}
    The proof of {\bf AC!} in \cite{rijke2015sets}  is a special case of a result that holds in the internal dependent type theory $\mathcal{T}_{reg}$ of any regular category \cite{Maietti05}. Following \cite{Maietti05}, observe that the propositional truncation of a $\Sigma$-type $\Sigma_{(x:A)}\ P(x)$ can be equivalently expressed in $\mathcal{T}_{reg}$  as a quotient, namely the quotient $\Sigma_{(x:A)}\ P(x)/\mathbf{1}$ over the unit type identifying all the proof-terms. 
    Then, the proof in \cite{rijke2015sets} corresponds to the argument in \cite{Maietti05} relying on the fact that unique existence can be written as $\Sigma_{(y:B)}\ R(x,y)$ together with the usual uniqueness condition. In this case, $\Sigma_{(y:B)}\ R(x,y)$ is a mono type, according to the terminology introduced in \cite{Maietti05}, and is equivalent to its quotient over the unit type, and hence we can use $\Sigma$-projections to prove {\bf AC!}.
\end{remark}

\noindent A crucial feature of \hott\ is the presence of quotient sets as higher inductive types. This means that Cauchy real numbers à la Bishop can be formalized as a quotient of the type of Cauchy sequences under the equivalence relation $=_{\mathbb{R}_C}$ without resorting to setoids. In particular, they form a set in \hott.

\noindent The case of Dedekind reals is more delicate. The notion of cut can be formalized as a pair of propositional functions $(L,U): \mathbb{Q} \rightarrow \mathsf{Prop}_{\mathcal{U}_k}$ relative to some univalent universe of set $U_k$, that satisfy the conditions of Def.\ref{dedcut}. So it is possible to show that the Dedekind cuts up to a given universe form a set, but we cannot collect into a set all the Dedekind cuts varying in any universe unless impredicativity in the form of {\it propositional resizing} is assumed. Indeed, for the absence of dependent choice  Prop.\ref{dedres} fails in \hott, and hence there is no reduction of Dedekind reals of level $k$ to those of the lowest level. 

\noindent This fact can be contrasted with what happens in the Minimalist Foundation, and in particular in its extensional level where the presence of the power {\it collection} of a set 
allows us to collect all Dedekind cuts together inside the theory.

\noindent In any case, it must be noticed that real numbers à la Cauchy  in \hott\ are defined
in a different way from the usual type-theoretic definition of Cauchy reals or Bishop reals.
Indeed,  they are defined as a  higher inductive type $\mathbb{R}_{\mathsf{c}}$   (\cite{hottbook}, see Sec. 11.3)  capturing  the idea that Cauchy reals can be regarded as {\it the free complete metric space generated by the rationals} and they form an h-set.

\section{The incompatibility of ${\bf HA}^\omega + {\bf iRC!}_{\mathbf{N},\mathbf{N}}$ with Weyl's predicativity}\label{sec4}

\noindent In this section, we want to show that already the weaker system of  Heyting Arithmetic in all finite types (${\bf HA^\omega}$), when extended with the number-theoretic internal rule of unique choice and the law of excluded middle, becomes impredicative. Since the most common constructive and predicative foundations interpret this system, then the result can be easily extended to these more expressive theories. 

\noindent For the presentation of  ${\bf HA^\omega}$, we refer to the standard reference \cite{TVD88}.

The finite type structure $\mathrm{T}$ is defined as follows:  $\mathbf{N}$ is a finite type and if $\sigma, \tau$ are finite types, then so are $\sigma \rightarrow \tau$ and $\sigma \times \tau$.

The language of ${\bf HA^\omega}$ includes countably infinite variables $x^\sigma, y^\sigma, z^\sigma, \ldots$ for each type symbol $\sigma \in \mathrm{T}$. A binary predicate for equality $=_\sigma$ at each type $\sigma \in \mathrm{T}$ and an application operator $\mathsf{Ap}_{\sigma, \tau}$ for all $\sigma, \tau \in \mathrm{T}$. Furthermore, the language comprehends constants for zero and successor, recursor, pairing and projections and the combinators ${\bf k}$ and ${\bf s}$. Note that the logical implication is denoted with the symbol $\supset$ to avoid confusion with the symbol $\rightarrow$ used to denote function types.

The axioms of ${\bf HA^\omega}$ include the axioms of many-sorted intuitionistic predicate logic with equality, congruence rules for equality, the usual axioms for zero and successor with the induction scheme, and the defining conditions for the constants.

We recall two fundamental results about ${\bf HA^\omega}$. It is immediate to realize that Heyting Arithmetic ({\bf HA}) is a subsystem of ${\bf HA^\omega}$. Furthermore, if we extend ${\bf HA^\omega}$ with the Axiom of Choice for all finite types $\sigma, \tau$:

\begin{equation*}
    ({\bf AC}) \ \ \forall x^\sigma \ \exists y^\tau\  \phi(x,y)\ \supset\ \exists f^{\sigma \rightarrow \tau}\  \forall x^\sigma\  \phi(x, fx)
\end{equation*}

\noindent the resulting theory is conservative over {\bf HA}. This result is known as Goodman's Theorem \cite{goodman1978}. 

\noindent Another significant fact is that ${\bf HA^\omega}$ is equiconsistent with its classical version by a double negation translation. This result can be found in \cite{troelstra1973}. However, this result does not extend to the case of ${\bf HA^\omega} + {\bf AC}$. Indeed, it is possible to show that ${\bf HA^\omega} + {\bf AC} + {\bf LEM}$, where {\bf LEM} is the law of excluded middle $\phi \vee \neg \phi$ for all formulas,  is strong enough to derive the second-order comprehension axiom and thus to interpret full second-order classical arithmetic ${\bf PA^2}$, which is an impredicative theory.   The first to observe that ${\bf HA^\omega}$ extended with the Axiom of Choice and classical logic allows one to derive the second-order comprehension principle was Spector in \cite{spector1962provably}. We remark that already the {\it internal number-theoretic rule of unique choice}  is sufficient to carry out the proof. 

\begin{definition}
The {\em internal rule of unique choice}, called ${\bf iRC!}$,  is formulated for ${\bf HA^\omega}$ as follows: 
\begin{equation*}
    \mbox{if}\ \ {\bf HA^\omega} \vdash \forall x^\sigma \ \exists ! y^\tau \ \phi(x,y) \qquad  \mbox{then}\ \ {\bf HA^\omega} \vdash \exists f^{\sigma \rightarrow \tau}\ \forall x^\sigma \ \phi(x,fx)
\end{equation*}
where $\exists ! y^\tau \ \phi(x,y)\ \ourdef\ \exists y^\tau \ \phi(x,y) \land \forall y_1^\tau \ \forall y_2^\tau \ (\ \phi(x,y_1) \ \land\  \phi(x,y_2)\ \supset\ y_1 =_\tau y_2)$. 
\vspace{0.6em}

The special case when ${\bf iRC!}$ acts only on natural numbers, namely both $\sigma$ and $\tau$ coincide with the type $\mathbf{N}$ of natural numbers, 
is called {\it internal number-theoretic rule of unique choice} and it is denoted with the symbol ${\bf iRC!_{\mathbf{N},\mathbf{N}}}$.
\end{definition}

\begin{remark}
    We call the rule displayed above {\it internal} rule of unique choice to contrast it with the following {\it rule of  unique choice}: if $ {\bf HA^\omega} \ \vdash\  \forall x^\sigma \ \exists ! y^\tau\
     \phi(x,y)$, then there exists a function $f^{\sigma \ \rightarrow \ \tau}$ such that $ {\bf HA^\omega} \ \vdash\ \forall x^\sigma \ \phi(x, fx)$,  where the existence of the choice function is stated externally.
    Admittedly, in  $ {\bf HA^\omega}$ the two rules are equivalent by the results in \cite{troelstra1973}. In general, this equivalence holds for all the systems that satisfy the existence property, but it cannot be stated in general. For instance, it does not hold for classical systems. 

\end{remark}

\noindent The theory ${\bf PA}^2$ is usually formulated as an extension of the language of ${\bf PA}$ by a second sort of variables $X, Y, Z \ldots$ standing for subsets of natural numbers and a membership symbol $\in$, where $t \in X$ is a new atomic formula for $t$ first-order term. The axioms include the second-order induction axiom and the full scheme of comprehension for sets of natural numbers. However, the theory can be equally formulated in a language with  typed primitive function symbols. Then first-order variables are interpreted as variables of type $\mathbf{N}$, second-order variables are interpreted by their characteristic functions with type $\mathbf{N} \rightarrow \mathbf{N}$ and the atomic formulas $t \in X$ are interpreted by $X(t) = 1$. In particular, the comprehension axiom is formulated as follows: 

\begin{equation*}
    ({\bf CA})\ \ \exists f^{\mathbf{N}\ \rightarrow\ \mathbf{N}} \ \forall x^\mathbf{N}\ \ (\ fx = 1\ \Leftrightarrow\ \phi(x)\ )
\end{equation*}
where $\phi(x)$ is an arbitrary formula of the language provided that $f$ does not occur free in $\phi$.

Now, we show that in ${\bf HA^\omega} + {\bf iRC!_{\mathbf{N},\mathbf{N}}} + {\bf LEM}$ this comprehension principle is derivable. 

\begin{proposition}\label{CA} The theory ${\bf HA^\omega} + {\bf iRC!_{\mathbf{N},\mathbf{N}}} + {\bf LEM}$ derives the second-order comprehension principle ${\bf CA}$.
\end{proposition}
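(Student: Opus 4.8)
The plan is to derive the comprehension axiom $({\bf CA})$ for an arbitrary formula $\phi(x)$ by first producing, with the help of classical logic, a provably total and single-valued relation over $\mathbf{N}$, and then invoking ${\bf iRC!}_{\mathbf{N},\mathbf{N}}$ to extract a characteristic function. Concretely, I would set
\begin{equation*}
 \psi(x,y)\ \ourdef\ (\,\phi(x)\ \land\ y =_{\mathbf{N}} 1\,)\ \lor\ (\,\neg\phi(x)\ \land\ y =_{\mathbf{N}} 0\,),
\end{equation*}
a formula with a single free type-$\mathbf{N}$ parameter $x$ and a single free type-$\mathbf{N}$ variable $y$.

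The first step is to show ${\bf HA^\omega}+{\bf LEM}\vdash \forall x^{\mathbf{N}}\ \exists! y^{\mathbf{N}}\ \psi(x,y)$. Existence follows by a case split on $\phi(x)\lor\neg\phi(x)$, which ${\bf LEM}$ supplies: in the first case take $y=1$, in the second $y=0$. Uniqueness follows since $\phi(x)$ and $\neg\phi(x)$ cannot both hold, so the two disjuncts are mutually exclusive, and $0\neq_{\mathbf{N}}1$ is provable in ${\bf HA}$; a routine propositional argument then yields $y_1 =_{\mathbf{N}} y_2$ from $\psi(x,y_1)\land\psi(x,y_2)$. Since $\sigma=\tau=\mathbf{N}$ here, the hypothesis of ${\bf iRC!}_{\mathbf{N},\mathbf{N}}$ is met, and we conclude ${\bf HA^\omega}+{\bf iRC!}_{\mathbf{N},\mathbf{N}}+{\bf LEM}\vdash \exists f^{\mathbf{N}\rightarrow\mathbf{N}}\ \forall x^{\mathbf{N}}\ \psi(x,fx)$.

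The final step is to check that this $f$ is exactly a characteristic function for $\phi$, i.e. that $\forall x^{\mathbf{N}}\ (fx = 1 \Leftrightarrow \phi(x))$, and that $f$ does not occur in $\phi$. From $\psi(x,fx)$ and classical reasoning: if $\phi(x)$ then the second disjunct of $\psi(x,fx)$ would force $fx=0$, contradicting $\neg\phi(x)$, so $fx=1$; conversely if $fx=1$ then the second disjunct is excluded (again by $0\neq_{\mathbf{N}}1$), so the first disjunct holds, giving $\phi(x)$. The freshness side condition on $f$ in $({\bf CA})$ is immediate since $f$ is introduced only by the existential witness and $\phi$ was fixed beforehand. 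I do not anticipate a serious obstacle here; the only point requiring a little care is making sure the application of ${\bf iRC!}_{\mathbf{N},\mathbf{N}}$ is legitimate, namely that $\psi$ is a formula in the language of ${\bf HA^\omega}$ with exactly the free variables $x^{\mathbf{N}}$ and $y^{\mathbf{N}}$ — which it is, since $\phi$ is a first-order formula of the language and the added clauses only mention $y$, $0$, $1$ and $=_{\mathbf{N}}$ — and that the derivation of $\forall x\,\exists! y\,\psi(x,y)$ genuinely takes place in ${\bf HA^\omega}+{\bf LEM}$ (without choice), so that the rule's premise is discharged correctly.
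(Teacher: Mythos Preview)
Your proposal is correct and follows essentially the same route as the paper: define the graph $\psi(x,y)\equiv(\phi(x)\land y=1)\lor(\neg\phi(x)\land y=0)$, use {\bf LEM} and $0\neq 1$ to establish $\forall x\,\exists! y\,\psi(x,y)$, then apply ${\bf iRC!}_{\mathbf{N},\mathbf{N}}$ to extract the characteristic function. Your write-up is in fact slightly more detailed than the paper's in verifying the biconditional $fx=1\Leftrightarrow\phi(x)$ and the freshness of $f$; just be careful that by ``first-order formula'' you mean an arbitrary formula of the many-sorted language of ${\bf HA}^\omega$ (possibly with higher-type quantifiers), since restricting to genuinely arithmetical $\phi$ would yield only arithmetical comprehension rather than full {\bf CA}.
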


\begin{proof}
    Let us consider a formula $\phi(x)$ with $x$ of type $\mathbf{N}$. Then, we can define the following relation 

    \begin{equation*}
        \chi_\phi (x,y)\ \ourdef\ (\phi(x)\ \land\ y^\mathbf{N} = 1)\ \lor\ (\neg \phi(x)\ \land\ y^\mathbf{N} = 0)
    \end{equation*}

    By the validity of {\bf LEM}, we can argue by cases: 

    \begin{itemize}
        \item[--] if $\phi(x)$ holds, then $\chi_\phi (x,1)$ holds.

        \item[--] if $\neg \phi(x)$ holds, then $\chi_\phi(x,0)$ holds. 
    \end{itemize}

    We recall that $0 \neq 1$ follows from the axioms of ${\bf HA^\omega}$. Thus we obtain $\forall x^\mathbf{N}\ \exists ! y^\mathbf{N}\ \chi_\phi (x,y)$, that is a functional relation. 

    Therefore, we can apply ${\bf iRC!}_{\mathbf{N}, \mathbf{N}}$  and we obtain that 

    \begin{equation*}
        \exists f^{\mathbf{N} \rightarrow \mathbf{N}}\ \forall x^\mathbf{N}\ \chi_\phi (x, fx)
    \end{equation*}

    which, unfolding the definition of $\chi_\phi$, amounts to 

    \begin{equation*}
        \exists  f^{\mathbf{N} \rightarrow \mathbf{N}}\ \forall x^\mathbf{N}\ ((\phi(x)\ \land\ fx=1)\ \lor\ (\neg \phi(x)\ \land\ fx = 0))
    \end{equation*}

    From which {\bf CA} follows. 
\end{proof}

\begin{corollary}\label{inchao}
    The theory ${\bf PA}^2$ can be interpreted within ${\bf HA^\omega} + {\bf iRC!_{\mathbf{N},\mathbf{N}}} + {\bf LEM}$, which is then impredicative and incompatible with Weyl's predicativity.
\end{corollary}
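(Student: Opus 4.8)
The plan is to leverage Proposition~\ref{CA} directly, together with the standard observation that ${\bf PA}^2$ in the typed formulation is nothing more than ${\bf HA^\omega}$-style first-order machinery over type $\mathbf{N}$ plus the comprehension axiom ${\bf CA}$ and classical logic. First I would fix the interpretation: since the typed presentation of ${\bf PA}^2$ uses first-order variables of type $\mathbf{N}$, second-order variables coded as characteristic functions of type $\mathbf{N}\rightarrow\mathbf{N}$, and atomic formulas $t\in X$ rendered as $X(t)=1$, every symbol of ${\bf PA}^2$ in this presentation already lives in the language of ${\bf HA^\omega}$. So the interpretation map is essentially the identity on formulas, once one picks out the appropriate fragment of finite types (only $\mathbf{N}$ and $\mathbf{N}\rightarrow\mathbf{N}$ are needed). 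The one subtle point is that ${\bf PA}^2$ quantifies over \emph{all} functions $\mathbf{N}\rightarrow\mathbf{N}$ as surrogates for subsets, whereas to be faithful we only want those functions that are genuinely characteristic functions, i.e.\ take values in $\{0,1\}$; this is handled by relativising second-order quantifiers to the predicate $\forall x^\mathbf{N}\,(fx=0\lor fx=1)$, which is classically harmless and unproblematic in ${\bf HA^\omega}$.

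Next I would verify that the axioms of ${\bf PA}^2$ are provable under this interpretation. The first-order Peano axioms and the arithmetical function constants are immediate, being already part of ${\bf HA^\omega}$; classical first-order logic over this fragment is available because we are adding ${\bf LEM}$. The second-order induction axiom translates to an instance of the induction scheme of ${\bf HA^\omega}$ applied to a formula mentioning a function variable $f$, which is again just an axiom of ${\bf HA^\omega}$ (the scheme is for \emph{all} formulas of the language, in particular those with free higher-type variables). The only nontrivial axiom is the comprehension scheme ${\bf CA}$, and this is exactly the content of Proposition~\ref{CA}: for any formula $\phi(x)$ (relativised as above, so that it becomes a formula of the ${\bf HA^\omega}$-language), the theory ${\bf HA^\omega}+{\bf iRC!_{\mathbf{N},\mathbf{N}}}+{\bf LEM}$ proves $\exists f^{\mathbf{N}\rightarrow\mathbf{N}}\,\forall x^\mathbf{N}\,(fx=1\Leftrightarrow\phi(x))$, and the $f$ so produced automatically satisfies $fx\in\{0,1\}$ by the definition of $\chi_\phi$, so it qualifies as an interpreted set. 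Hence every theorem of ${\bf PA}^2$ maps to a theorem of ${\bf HA^\omega}+{\bf iRC!_{\mathbf{N},\mathbf{N}}}+{\bf LEM}$.

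From the interpretation one reads off the two conclusions stated in the corollary. Since full classical second-order arithmetic ${\bf PA}^2$ is the paradigmatic impredicative theory (its proof-theoretic strength is well beyond that of any predicative system, in particular beyond the Feferman--Sch\"utte ordinal $\Gamma_0$ and far beyond ${\bf ACA}$/${\bf ACA}_0$), and ${\bf HA^\omega}+{\bf iRC!_{\mathbf{N},\mathbf{N}}}+{\bf LEM}$ interprets it, the latter theory cannot be predicative; in particular it is not reducible to the systems reconstructing Weyl's {\it Das Kontinuum}, so it is incompatible with classical predicativity \`a la Weyl. I expect the only real care needed is in the bookkeeping of the relativisation of second-order quantifiers and in noting that induction in ${\bf HA^\omega}$ covers formulas with higher-type parameters; the mathematical substance is entirely carried by Proposition~\ref{CA}, so nothing deeper is required.
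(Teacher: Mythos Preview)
Your proposal is correct and matches the paper's approach: the paper states this corollary with no proof at all, treating it as an immediate consequence of Proposition~\ref{CA}, and your write-up simply fills in the routine details (identity interpretation on the $\mathbf{N}$/$\mathbf{N}\!\rightarrow\!\mathbf{N}$ fragment, relativisation of second-order quantifiers to $\{0,1\}$-valued functions, induction scheme covering higher-type parameters) that the authors leave implicit.
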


\section{The incompatibility of  \mltt, \hott\ and {\bf CZF} with Weyl's predicativity}\label{sec5}

\noindent In this section, we show that the classical extensions of the systems presented in Section \ref{sec3} are not compatible with classical predicativity. 

\noindent By {\it compatibility of a theory $\mathsf{T}_1$ with another theory  $\mathsf{T}_2$} we mean that {\it there exists a translation from $\mathsf{T}_1$ to $\mathsf{T}_2$ preserving the meaning of logical and set-theoretical constructors}.   In this manner, a proof of a theorem in $\mathsf{T}_1$ can be transported into $\mathsf{T}_2$ with the same meaning. 

It is easy to see that ${\bf HA^\omega}$ extended with ${\bf iRC!_{\mathbf{N},\mathbf{N}}}$  is compatible both with \mltt\ and \hott\  since these are essentially extensions of ${\bf HA^\omega}$
validating ${\bf iRC!_{\mathbf{N},\mathbf{N}}}$.  

\begin{definition} A compatible translation from ${\bf HA}^\omega + {\bf iRC!_{\mathbf{N},\mathbf{N}}}$ to \mltt\ can be defined as follows: 

\begin{itemize}
    \item[-] $(\mathbf{N})^\ast\ \ourdef\ \mathbb{N}$, while $(\sigma \rightarrow \tau)^\ast\ \ourdef\ \sigma^\ast \rightarrow \tau^\ast$ and $(\sigma \times \tau)^\ast\ \ourdef\ \sigma^\ast \times \tau^\ast$. 

    \item[-] variables $x: \sigma$ are translated as variables $x: \sigma^\ast$ and application terms as application terms in \mltt.

    \item[-] constants of ${\bf HA}^\omega$ are translated as the corresponding terms in \mltt: e.g., $(\textbf{k})^\ast \ourdef \lambda x. \lambda y.x: \sigma^\ast \rightarrow (\tau^\ast \rightarrow \sigma^\ast)$.

    \item[-] the formulas of many-sorted intuitionistic logic are sent to the corresponding types according to the propositions-as-types principle: e.g., $(\exists x^\sigma\ \phi(x))^\ast\ \ourdef\ \Sigma_{(x:\sigma^\ast)}\ \phi^\ast(x)$. 
    
\end{itemize}
\end{definition}

\begin{proposition}\label{hamltt}
    The translation $(-)^\ast$ is a compatible translation of ${\bf HA}^\omega + {\bf iRC!_{\mathbf{N},\mathbf{N}}}$ in \mltt.
\end{proposition}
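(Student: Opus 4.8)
The plan is to establish the proposition in two parts. First, the translation $(-)^\ast$ manifestly preserves the intended meaning of all constructors: by its very definition it sends $\mathbf N$, the finite function type and the finite product type to $\mathbb N$, the function type and the product type of \mltt, it sends each equality predicate $=_\sigma$ to the identity type on $\sigma^\ast$, and it sends each logical connective and quantifier to its propositions-as-types counterpart ($\land\mapsto\times$, $\lor\mapsto +$, $\supset\mapsto\to$, $\bot\mapsto\mathbf 0$, $\forall\mapsto\Pi$, $\exists\mapsto\Sigma$, and $\exists!$ to the type $\Sigma!$ appearing in the statement of ${\bf AC!}$). Second --- and this is the substance --- I would prove \emph{soundness}: by induction on derivations, whenever ${\bf HA}^\omega + {\bf iRC!_{\mathbf N,\mathbf N}}$ proves a formula $\phi$ in a context $x_1:\sigma_1,\dots,x_n:\sigma_n$, the type $\phi^\ast$ is inhabited in \mltt\ in the context $x_1:\sigma_1^\ast,\dots,x_n:\sigma_n^\ast$. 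Together these give a compatible translation, so that any proof carried out in ${\bf HA}^\omega+{\bf iRC!_{\mathbf N,\mathbf N}}$ transports to \mltt.

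For soundness, I would first check that the translation is well defined on raw syntax --- types to types, variables and application terms to variables and applications --- and that each constant of ${\bf HA}^\omega$ (zero, successor, recursor, pairing, projections, ${\bf k}$, ${\bf s}$) is sent to a closed \mltt\ term of the corresponding type, as already indicated for ${\bf k}$. The key point here is that the defining equations of these constants --- the $\beta$-equations for ${\bf k}$ and ${\bf s}$, the pairing/projection equations, and the two recursion equations of the recursor --- all hold \emph{definitionally} for the chosen \mltt\ terms, being instances of $\beta$-reduction and of the computation rule for $\mathbb N$-elimination, so their translations are inhabited by $\mathsf{refl}$. Then I would run through the inference rules: the rules of many-sorted intuitionistic predicate logic are validated by the standard propositions-as-types interpretation (pairing/projections for $\times$, injections/case analysis for $+$, abstraction/application for $\to$, introduction and elimination for $\Pi$ and $\Sigma$, $\mathbf 0$-elimination for ex falso); the congruence rules for equality, including the congruence of the application operator, are validated by $\mathsf{Id}$-elimination (transport); the axioms for zero and successor, namely $\neg\,\mathsf{Id}(\mathbb N,\mathsf{succ}\,x,0)$ and injectivity of $\mathsf{succ}$, are provable in \mltt\ by $\mathbb N$-elimination (a predecessor function, and a family on $\mathbb N$ separating $0$ from the successors); and the induction scheme is exactly dependent $\mathbb N$-elimination applied to the family $\phi^\ast(x)\ (x:\mathbb N)$.

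The one genuinely new case is the rule ${\bf iRC!_{\mathbf N,\mathbf N}}$. Given a derivation of its premise $\forall x^{\mathbf N}\,\exists!\,y^{\mathbf N}\,\phi(x,y)$, soundness applied to that derivation yields an inhabitant of $\Pi_{(x:\mathbb N)}\,\Sigma!_{(y:\mathbb N)}\,\phi^\ast(x,y)$, where $\Sigma!$ is literally the translation of $\exists!$. Applying the type-theoretic axiom of unique choice ${\bf AC!}$ --- which, as recalled above, follows in \mltt\ from the derivable axiom of choice ${\bf AC}$ --- produces an inhabitant of $\Sigma_{(f:\mathbb N\to\mathbb N)}\,\Pi_{(x:\mathbb N)}\,\phi^\ast(x,fx)$, i.e.\ of $(\exists f^{\mathbf N\to\mathbf N}\,\forall x^{\mathbf N}\,\phi(x,fx))^\ast$, which is the translation of the conclusion of the rule. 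This closes the induction and yields the proposition.

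I expect the main obstacle to be presentational rather than mathematical: one must pin down the precise formulation of ${\bf HA}^\omega$ from \cite{TVD88} that is meant, in particular the status of equality at higher types. If the higher-type equalities are the \emph{neutral}, non-extensional ones --- as in the presentation for which Goodman's theorem \cite{goodman1978} is stated --- then interpreting $=_\sigma$ by the intensional identity type is sound, as sketched above. If instead function extensionality were among the axioms, the intensional $\mathsf{Id}$ of \mltt\ would \emph{not} validate it, and one would have to interpret higher-type equality by a defined pointwise equality (a logical relation built over $\mathsf{Id}$ at type $\mathbb N$), or else appeal to \hott\ where function extensionality holds; since the neutral presentation is the one in play, the direct identity-type interpretation suffices and no such detour is needed.
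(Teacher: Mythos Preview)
Your proposal is correct and follows essentially the same approach as the paper: an induction on the structure of derivations, with the only nontrivial case ${\bf iRC!_{\mathbf N,\mathbf N}}$ handled via the type-theoretic choice principle available in \mltt. The paper's proof is a one-line version of yours, invoking ``the more general rule of choice in \mltt'' rather than ${\bf AC!}$ specifically, and omitting the routine verification of the logical and arithmetical axioms that you spell out.
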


\begin{proof}
    The proof is by induction over the structure of derivations. In particular, the interpretation of the number-theoretic internal rule of unique choice follows from the validity of the more general rule of choice in \mltt. 
\end{proof}

Therefore,  suppose to extend \mltt\ with the law of excluded middle {\bf LEM} formulated as the existence of a proof-term of the type
$$A+ (A\rightarrow N_0)$$
where  $A$ is any type, $N_o$ is the empty set interpreting the falsum constant, $+$ is the symbol for the disjoint sum type and $\rightarrow$  is the symbol for the function type.  Then in \mltt+{\bf LEM}
we can reproduce the argument showing the
incompatibility of {\bf HA$^\omega$} with Weyl's predicativity:

\begin{corollary}
    \mltt\ + {\bf LEM} becomes impredicative.
\end{corollary}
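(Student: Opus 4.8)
The plan is to combine the compatible translation of ${\bf HA}^\omega + {\bf iRC!_{\mathbf{N},\mathbf{N}}}$ into \mltt\ established in Proposition~\ref{hamltt} with the impredicativity argument of Section~\ref{sec4}, checking only that the extra classical axiom ${\bf LEM}$ is carried across the translation. Concretely, I would observe that the translation $(-)^\ast$ sends the arithmetical falsum to the empty type $N_0$ and the disjunction $\phi \vee \psi$ to the disjoint sum $\phi^\ast + \psi^\ast$ under the propositions-as-types reading, so that the arithmetical instance $\phi \vee \neg\phi$ is translated precisely to a type of the form $A + (A \rightarrow N_0)$. Hence adding ${\bf LEM}$ to \mltt\ in the stated form supplies, via $(-)^\ast$, a proof-term for every translated instance of the arithmetical law of excluded middle, so the translation extends to a compatible translation of ${\bf HA}^\omega + {\bf iRC!_{\mathbf{N},\mathbf{N}}} + {\bf LEM}$ into \mltt\ $+$ ${\bf LEM}$.

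Next I would invoke Corollary~\ref{inchao}: inside ${\bf HA}^\omega + {\bf iRC!_{\mathbf{N},\mathbf{N}}} + {\bf LEM}$ one can interpret full second-order classical arithmetic ${\bf PA}^2$, which is impredicative. Transporting this interpretation along the compatible translation into \mltt\ $+$ ${\bf LEM}$ yields an interpretation of ${\bf PA}^2$ there as well; since a compatible translation preserves the meaning of the logical and set-theoretic constructors (in the sense fixed at the start of Section~\ref{sec5}), the impredicative comprehension available in ${\bf PA}^2$ is genuinely reflected in \mltt\ $+$ ${\bf LEM}$. This shows that \mltt\ $+$ ${\bf LEM}$ proves (a translation of) every instance of full second-order comprehension, so it cannot be predicative in the sense of Weyl, and therefore is incompatible with classical predicativity à la Weyl.

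The one point requiring genuine care — the ``main obstacle'' — is verifying that the translation really does transmit ${\bf iRC!_{\mathbf{N},\mathbf{N}}}$ in the presence of ${\bf LEM}$, i.e. that adding ${\bf LEM}$ to \mltt\ does not disturb the validity of the choice rule on natural numbers used in Proposition~\ref{hamltt}. This is immediate because ${\bf iRC!_{\mathbf{N},\mathbf{N}}}$ is derived in \mltt\ from the $\Sigma$-elimination (projection) rules, which are unaffected by adding new axioms; and the relation $\chi_\phi$ built in the proof of Proposition~\ref{CA} is, after translation, still a functional relation over $\mathbb{N}$ because $0 \neq 1$ holds in \mltt. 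A secondary bookkeeping point is that ${\bf LEM}$ as stated for \mltt\ quantifies over all types $A$, which is strictly more than the arithmetical instances needed; this only makes the argument easier, but I would state it as working already with ${\bf LEM}$ restricted to the types in the image of $(-)^\ast$, so that the corollary is sharp. With these checks in place the corollary follows directly from Corollary~\ref{inchao} and Proposition~\ref{hamltt}.
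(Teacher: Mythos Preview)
Your proposal is correct and follows exactly the route the paper takes: the paper's proof is the one-line ``It follows from Prop.~\ref{hamltt} and Cor.~\ref{inchao},'' and you have simply unpacked this by checking that the translation $(-)^\ast$ carries ${\bf LEM}$ across and that the choice rule remains available. The additional bookkeeping you supply (about $\Sigma$-elimination being unaffected by new axioms, and about ${\bf LEM}$ for all types sufficing) is sound and merely makes explicit what the paper leaves to the reader.
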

\begin{proof}
It follows from Prop.~\ref{hamltt} and Cor.~\ref{inchao}.
\end{proof}

\begin{remark}
    In general, in \mltt\ with classical logic we can use the type of booleans {\bf 2} as a classifier, however we do not get a (strong) power object. For instance, the type $\mathbb{N} \rightarrow {\bf 2}$ behaves more like a {\it weak power object}, because \mltt\ lacks of the relevant extensionality principles. 
\end{remark}

\noindent The translation of ${\bf HA}^\omega$ into \mltt\ defined above can be refined in the case of \hott. A finite type $\sigma$ is translated as a pair $\sigma^\ast\ \equiv\ (\sigma^\star, p_{\sigma^\star})$, where the second component is a proof that $\sigma^\star$ is a h-set in \hott. Similarly, formulas of ${\bf HA}^\omega$ can be translated as h-propositions in \hott, while the rest of the translation is defined as in the case of \mltt. Therefore, the following fact holds: 

\begin{proposition}\label{comphot}
     The translation $(-)^\ast$ is a compatible translation of ${\bf HA}^\omega + {\bf iRC!_{\mathbf{N},\mathbf{N}}}$ in \hott.
\end{proposition}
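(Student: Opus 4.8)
The plan is to re-run the proof of Proposition~\ref{hamltt} by induction on the structure of derivations in ${\bf HA}^\omega + {\bf iRC!_{\mathbf{N},\mathbf{N}}}$, the only essential change being that in \hott\ formulas are sent to h-propositions rather than to arbitrary types, so the clause validating ${\bf iRC!_{\mathbf{N},\mathbf{N}}}$ can no longer appeal to the propositions-as-types choice rule (which fails under the h-propositional reading) but must instead use the Axiom of Unique Choice {\bf AC!} of Proposition~\ref{unipri}, whose scope is exactly unique existence.

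Before the induction I would check that the refined translation is well posed, i.e. that the two invariants ``$\sigma^\star$ is (provably) an h-set'' and ``$\phi^\ast$ is (provably) an h-proposition'' are preserved. For the first: $\mathbb{N}$ is an h-set (its identity types are decidable, so Hedberg's theorem applies), and h-sets are closed under $\times$ and under the function type, the latter by function extensionality, which holds in \hott\ by univalence. For the second: $=_\sigma$ is translated by an identity type of the h-set $\sigma^\star$, hence an h-proposition; $\wedge$ is a product of h-propositions; $\supset$ and $\forall$ are (dependent) function types into h-propositions, again h-propositions by function extensionality; $\bot$ is the empty type; and $\vee,\exists$ are propositional truncations, h-propositions by construction. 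This is the logic of mere propositions of \cite{hottbook}, which soundly models many-sorted intuitionistic predicate logic with equality, so all the logical axioms and rules, together with the congruence rules for $=_\sigma$ (via transport and $\mathsf{ap}$), are validated; and the arithmetical part carries over from \mltt\ unchanged --- the combinators $\mathbf{k},\mathbf{s}$, the recursor, pairing and projections satisfy their defining equations, $0\neq 1$ and the other axioms for zero and successor come from the encode--decode description of $=_{\mathbb{N}}$, and the induction scheme is the eliminator of $\mathbb{N}$ instantiated at a family of h-propositions.

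For the rule ${\bf iRC!_{\mathbf{N},\mathbf{N}}}$ itself: given a subderivation of $\forall x^{\mathbf{N}}\,\exists! y^{\mathbf{N}}\,\phi(x,y)$, the induction hypothesis furnishes an inhabitant of $\Pi_{(x:\mathbb{N})}\ \exists!_{(y:\mathbb{N})}\ \phi^\ast(x,y)$ with $\phi^\ast(x,y)$ an h-proposition, which is precisely the hypothesis of {\bf AC!} in the form of Proposition~\ref{unipri}; applying it yields an inhabitant of $\Sigma_{(f:\mathbb{N}\rightarrow\mathbb{N})}\ \Pi_{(x:\mathbb{N})}\ \phi^\ast(x,fx)$, and postcomposing with the truncation unit gives an inhabitant of $\vert\vert\ \Sigma_{(f:\mathbb{N}\rightarrow\mathbb{N})}\ \Pi_{(x:\mathbb{N})}\ \phi^\ast(x,fx)\ \vert\vert$, i.e. of $(\exists f^{\mathbf{N}\rightarrow\mathbf{N}}\,\forall x^{\mathbf{N}}\,\phi(x,fx))^\ast$, as required. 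The step I expect to demand the most care is keeping the two invariants above genuinely in step throughout the induction, since both the interpretation of equality and the appeal to Proposition~\ref{unipri} depend on them; everything rests on function extensionality, so beyond Proposition~\ref{hamltt} the only real content is the substitution of {\bf AC!} for the full choice rule, which works precisely because ${\bf iRC!_{\mathbf{N},\mathbf{N}}}$ never leaves the realm of unique existence.
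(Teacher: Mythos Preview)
Your proposal is correct and aligns with the paper's (implicit) reasoning: the paper states Proposition~\ref{comphot} without an explicit proof, relying on the preceding paragraph that refines the \mltt\ translation by carrying h-set proofs for types and sending formulas to h-propositions. Your argument spells out exactly this refinement and, crucially, replaces the appeal to the full choice rule in the proof of Proposition~\ref{hamltt} by {\bf AC!} from Proposition~\ref{unipri}, which is the intended point; the additional care you take with the h-set/h-proposition invariants (Hedberg, function extensionality, truncations for $\vee$ and $\exists$) is more detailed than anything the paper records but is precisely what is needed to make the induction go through.
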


As in the case of \mltt, we can reproduce the same proof of Prop.\ref{CA} but with a significant difference. The law of excluded middle cannot be formulated according to the propositions-as-types identifications for {\it all} types in \hott, because this would yield a contradiction in presence of Univalence (see Ch.3 \cite{hottbook}). However, it is still possible to formulate ${\bf LEM}$  for all those types that are h-propositions as
$$\phi \vee \neg \phi$$
for all h-propositions $\phi$.

This formulation serves our purposes and is consistent with Univalence. 

\begin{corollary}
    \hott\ with the addition of the law of excluded middle for h-propositions becomes impredicative.
\end{corollary}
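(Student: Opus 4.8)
The plan is to transport the argument of Proposition~\ref{CA} through the compatible translation $(-)^\ast$ of Proposition~\ref{comphot}, checking that $\hott$ with ${\bf LEM}$ for h-propositions validates everything that proof required of ${\bf HA^\omega} + {\bf iRC!_{\mathbf{N},\mathbf{N}}} + {\bf LEM}$. First I would observe that, since by Proposition~\ref{comphot} every formula $\phi(x)$ of ${\bf HA^\omega}$ translates to an h-proposition $\phi^\ast(x)$ over the h-set $\mathbb{N}$, the relation
\[
  \chi_\phi^\ast(x,y)\ \ourdef\ (\phi^\ast(x)\ \land\ y =_{\mathbb{N}} 1)\ \lor\ (\neg\phi^\ast(x)\ \land\ y =_{\mathbb{N}} 0)
\]
is again an h-proposition for $x,y:\mathbb{N}$: here $\land$ is the product of h-propositions, $\neg$ the h-propositional negation, and $\lor$ the propositional truncation of the binary sum, all of which preserve h-propositionhood, while $y =_{\mathbb{N}} 0$ and $y =_{\mathbb{N}} 1$ are h-propositions because $\mathbb{N}$ is an h-set.

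Next I would run the case analysis of Proposition~\ref{CA} verbatim, but invoking ${\bf LEM}$ only at the h-proposition $\phi^\ast(x)$: in the first case this yields $\chi_\phi^\ast(x,1)$, in the second $\chi_\phi^\ast(x,0)$, and since $0 \neq 1$ in $\mathbb{N}$ the witness is unique, so $\Pi_{(x:\mathbb{N})}\,\exists !_{(y:\mathbb{N})}\,\chi_\phi^\ast(x,y)$ holds. By Proposition~\ref{unipri}, the axiom ${\bf AC!}$ of $\hott$ --- which is more than enough to simulate the internal rule ${\bf iRC!_{\mathbf{N},\mathbf{N}}}$ --- we obtain $f:\mathbb{N}\to\mathbb{N}$ with $\Pi_{(x:\mathbb{N})}\,\chi_\phi^\ast(x,fx)$, and unfolding $\chi_\phi^\ast$ this is exactly the translation of the comprehension statement $\exists_{(f:\mathbb{N}\to\mathbb{N})}\,\Pi_{(x:\mathbb{N})}\,(fx = 1 \Leftrightarrow \phi^\ast(x))$, i.e. the interpretation of ${\bf CA}$. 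Composing with the interpretation of ${\bf PA}^2$ inside ${\bf HA^\omega} + {\bf iRC!_{\mathbf{N},\mathbf{N}}} + {\bf LEM}$ from Corollary~\ref{inchao}, we get an interpretation of ${\bf PA}^2$ in $\hott$ extended with ${\bf LEM}$ for h-propositions, so the latter is impredicative.

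The step I expect to need the most care is making sure that the case split invokes ${\bf LEM}$ only at h-propositions and that every connective occurring in $\chi_\phi^\ast$ is its h-propositional version, so that Univalence is not endangered --- recall from Ch.~3 of \cite{hottbook} that the naive propositions-as-types ${\bf LEM}$ would be inconsistent here. In particular one must use the truncated disjunction and check that ${\bf AC!}$, rather than the untruncated type-theoretic ${\bf AC}$, already suffices: it does, precisely because $\exists !$ forces $\Sigma_{(y:\mathbb{N})}\chi_\phi^\ast(x,y)$ to be contractible, hence an h-proposition to which Proposition~\ref{unipri} applies. Everything else is a routine rereading of the proof of Proposition~\ref{CA} under $(-)^\ast$.
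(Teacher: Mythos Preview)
Your proof is correct and follows essentially the same route as the paper, which simply cites Proposition~\ref{comphot} and Corollary~\ref{inchao} and leaves the reader to observe that, under the translation $(-)^\ast$, formulas become h-propositions so that h-propositional ${\bf LEM}$ validates the translated ${\bf LEM}$, while ${\bf iRC!_{\mathbf{N},\mathbf{N}}}$ is already handled by Proposition~\ref{unipri}. What you have done is unfold this two-line citation explicitly, re-running the argument of Proposition~\ref{CA} inside \hott\ and checking carefully that every connective stays h-propositional and that only ${\bf AC!}$ (not untruncated ${\bf AC}$) is needed; this extra care is welcome but does not change the underlying strategy.
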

\begin{proof}
It follows from  Prop.~\ref{comphot} and Cor.~\ref{inchao}.
\end{proof}

\begin{remark}
    When \hott\ is extended with {\bf LEM} for h-propositions, it is possible to show that the type of h-propositions within any universe $\mathcal{U}$, that is $\mathsf{Prop}_{\mathcal{U}}$, is equivalent (and hence equal by Univalence) to the type of Booleans {\bf 2} in $\mathcal{U}$. Therefore, {\bf 2} behaves like a {\it subobject classifier} and we can prove that the category of h-sets, as defined in \cite{hottbook}, is an elementary topos.
\end{remark}

\begin{definition} A compatible translation from ${\bf HA}^\omega + {\bf iRC!_{\mathbf{N},\mathbf{N}}}$ to 
{\bf CZF}\ can be defined as follows: 

\begin{itemize}
    \item[-] $(\mathbf{N})^\ast\ \ourdef\ \omega$, while $(\sigma \rightarrow \tau)^\ast\ \ourdef\ \sigma^\ast \rightarrow \tau^\ast$, which is the set of functional relations from $\sigma^\ast$ to $\tau^\ast$, and $(\sigma \times \tau)^\ast\ \ourdef\ \sigma^\ast \times \tau^\ast$, which is the usual cartesian product of sets. 

    \item[-] terms of $\mathbf{HA}^\omega$ are interpreted by the corresponding definable terms of {\bf CZF}.

\item [-] the formulas of many-sorted intutionistic logic are interpreted by the corresponding formulas of intuitionistic logic, with sorts interpreted as predicates: e.g., $(\forall x^\sigma\ \phi(x))^\ast \ourdef \forall x\ ( x \in \sigma^\ast\ \rightarrow\ \phi^\ast(x))$.

\end{itemize}
\end{definition}
\begin{proposition}\label{compczf}
    The translation $(-)^\ast$ is a compatible translation of ${\bf HA}^\omega + {\bf iRC!_{\mathbf{N},\mathbf{N}}}$ in {\bf CZF}.
\end{proposition}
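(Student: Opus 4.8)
The plan is to proceed by induction on the structure of derivations in ${\bf HA}^\omega + {\bf iRC!_{\mathbf{N},\mathbf{N}}}$, checking that every axiom and rule is validated under $(-)^\ast$; since the translation sends the connectives and quantifiers to themselves (with each sort read as a bounded quantifier $x \in \sigma^\ast$) and the constructors $\to$, $\times$ respectively to the set of functional relations and to the cartesian product, plain soundness automatically yields \emph{compatibility} in the sense required. First I would dispatch the logical fragment: the axioms and rules of many-sorted intuitionistic predicate logic with equality translate on the nose, because the ambient logic of {\bf CZF} is intuitionistic and the eigenvariable side conditions are respected by the bounded forms $\forall x\,(x \in \sigma^\ast \to \cdots)$ and $\exists x\,(x \in \sigma^\ast \wedge \cdots)$; interpreting each primitive $=_\sigma$ as set-theoretic equality makes the equality axioms and all congruence rules — notably the one for $\mathsf{Ap}_{\sigma,\tau}$ — immediate. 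The arithmetic axioms for zero and successor, including $0 \neq 1$, hold for $\omega$ by Infinity, and the induction scheme of ${\bf HA}^\omega$ becomes mathematical induction on $\omega$, which {\bf CZF} proves for \emph{every} formula of its language (via $\in$-induction), hence for every translated $\phi^\ast$.

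Next I would treat the defining conditions for the constants. The function sets $(\sigma \to \tau)^\ast = \sigma^\ast \to \tau^\ast$ exist because {\bf CZF} proves Exponentiation (a consequence of Subset Collection), and the combinators, pairing, projections, and recursor are then realized as explicit functional relations definable by Replacement (which follows from Strong Collection): e.g. $({\bf k})^\ast = \{\,\langle a,\{\langle b,a\rangle : b \in \tau^\ast\}\rangle : a \in \sigma^\ast\,\}$, and similarly for $({\bf s})^\ast$; pairing and projections use the cartesian product, while the recursor's totality and computation rules come from the recursion theorem on $\omega$ available in {\bf CZF}. In each case one checks that the displayed set is a functional relation of the right type satisfying the ${\bf HA}^\omega$ equations, which is routine.

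The key new step is the validation of ${\bf iRC!_{\mathbf{N},\mathbf{N}}}$. Assuming ${\bf HA}^\omega + {\bf iRC!_{\mathbf{N},\mathbf{N}}} \vdash \forall x^{\mathbf N}\, \exists! y^{\mathbf N}\, \phi(x,y)$, the inductive hypothesis applied to that sub-derivation gives ${\bf CZF} \vdash \forall x \in \omega\, \exists! y \in \omega\, \phi^\ast(x,y)$; by Replacement the class $f := \{\,\langle x,y\rangle : x \in \omega \wedge \phi^\ast(x,y)\,\}$ is a set, it is a functional relation $\omega \to \omega$, hence an element of $(\mathbf{N}\to\mathbf{N})^\ast$, and it satisfies $\forall x \in \omega\, \phi^\ast(x,f(x))$ — precisely the translation of the conclusion. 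In contrast with \mltt\ and \hott, nothing needs to be reconciled here: in {\bf CZF} functions simply \emph{are} functional relations, so the internal rule of unique choice is a bare instance of Replacement.

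The main obstacle, such as it is, is not deep but a matter of hygiene: one must fix the neutral presentation of ${\bf HA}^\omega$ from \cite{TVD88} in which $=_\sigma$ is primitive with only congruence rules, so that set-theoretic equality of functional relations is an adequate reading of it; an extra axiom asserting, say, decidability of equality at higher types would not survive the translation, since equality of functions is not decidable in {\bf CZF}. The remaining work — writing the combinator, pairing, projection, and recursor constants as honest {\bf CZF}-definable functional relations and verifying their equations using Exponentiation and Replacement rather than Powerset — is purely routine bookkeeping.
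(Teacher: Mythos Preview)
Your proposal is correct and follows exactly the approach the paper implicitly intends: the paper actually states this proposition \emph{without proof}, relying on the analogous two-line argument given for the \mltt\ case (Prop.~\ref{hamltt}), namely induction over the structure of derivations with the key step being the validation of ${\bf iRC!_{\mathbf{N},\mathbf{N}}}$. Your write-up simply supplies the details the paper omits --- in particular your observation that in {\bf CZF} the internal rule of unique choice reduces to an instance of Replacement (since functions already \emph{are} functional relations) is precisely the reason the paper considers this case routine, and your caveat about fixing the neutral presentation of ${\bf HA}^\omega$ from \cite{TVD88} is a sound bit of hygiene that the paper leaves implicit.
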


\begin{corollary}
    {\bf CZF} with the addition of the law of excluded middle becomes impredicative.
\end{corollary}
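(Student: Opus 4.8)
The plan is to carry out for \textbf{CZF} exactly the argument already used for \mltt\ and \hott: deduce the incompatibility from the compatible translation of ${\bf HA}^\omega + {\bf iRC!_{\mathbf{N},\mathbf{N}}}$ given in Proposition~\ref{compczf} together with the interpretation of ${\bf PA}^2$ furnished by Corollary~\ref{inchao}. First I would note that the translation $(-)^\ast$ of Proposition~\ref{compczf} extends to a compatible translation of ${\bf HA}^\omega + {\bf iRC!_{\mathbf{N},\mathbf{N}}} + {\bf LEM}$ into ${\bf CZF} + {\bf LEM}$. This is immediate because $(-)^\ast$ acts homomorphically on the logical structure: it leaves the connectives and quantifiers untouched, merely relativizing each sorted quantifier over $\sigma$ to the predicate $x \in \sigma^\ast$. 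Hence every instance $\phi \vee \neg\phi$ of ${\bf LEM}$ for an ${\bf HA}^\omega$-formula $\phi$ is sent to the instance $\phi^\ast \vee \neg\phi^\ast$ of excluded middle for the corresponding first-order formula $\phi^\ast$ of \textbf{CZF}, which is an axiom of ${\bf CZF} + {\bf LEM}$. Composing this extended translation with the interpretation of ${\bf PA}^2$ inside ${\bf HA}^\omega + {\bf iRC!_{\mathbf{N},\mathbf{N}}} + {\bf LEM}$ from Corollary~\ref{inchao} then yields an interpretation of ${\bf PA}^2$ in ${\bf CZF} + {\bf LEM}$, so the latter is impredicative and incompatible with Weyl's predicativity.

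The one point that needs genuine checking lies already inside Proposition~\ref{compczf}, namely that the translation validates ${\bf iRC!_{\mathbf{N},\mathbf{N}}}$. Here I would exploit the fact that in \textbf{CZF} functions simply \emph{are} functional relations: if \textbf{CZF} proves $\forall x\,(x \in \omega \to \exists! y\,(y \in \omega \land \phi^\ast(x,y)))$, then applying Strong Collection — equivalently, the Replacement schema derivable from it — to the provably functional relation $\psi(x,z) \equiv \exists y\,(\phi^\ast(x,y) \land z = \langle x,y\rangle)$ produces the graph $f = \{\, z : \exists x \in \omega\ \psi(x,z)\,\}$ as a set. This $f$ is a total single-valued subset of $\omega \times \omega$, hence an element of the exponential $\omega \to \omega$ (which exists by Subset Collection, via Exponentiation), and it witnesses the conclusion of the rule. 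Everything else in the translation — the constants, application, the formers $\times$ and $\to$ interpreted by cartesian product and the set of functional relations, the logical rules — goes through by a routine induction on ${\bf HA}^\omega$-derivations, just as in Propositions~\ref{hamltt} and~\ref{comphot}.

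It is worth closing with the remark that this outcome is unsurprising: it is well known that ${\bf CZF} + {\bf LEM}$ already proves all the axioms of classical \textbf{ZF} — Subset Collection together with ${\bf LEM}$ recovers the full Powerset axiom, while $\Delta_0$-Separation together with ${\bf LEM}$ and Strong Collection recovers full Separation and Replacement — so ${\bf CZF} + {\bf LEM}$ is not merely impredicative but has the proof-theoretic strength of \textbf{ZF}. The route through Corollary~\ref{inchao} is preferred here only because it isolates the very same mechanism — number-theoretic unique choice combined with classical logic — responsible for the analogous collapses of predicativity in ${\bf HA}^\omega$, \mltt, and \hott.

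The main obstacle, such as it is, is purely one of bookkeeping: verifying that the relativization of sorted quantifiers in $(-)^\ast$ does not obstruct the transfer of ${\bf LEM}$, and that the ${\bf iRC!}$ step genuinely needs nothing stronger than Strong Collection (in particular, that one does not inadvertently rely on $\Delta_0$-Separation applied to a non-$\Delta_0$ formula). Both are easily dispatched, so no substantial difficulty is expected beyond what Proposition~\ref{compczf} already establishes.
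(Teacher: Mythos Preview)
Your proposal is correct and follows essentially the same route as the paper: the one-line proof there simply invokes Proposition~\ref{compczf} and Corollary~\ref{inchao}, and your argument is exactly this composition, with the (entirely reasonable) extra care of spelling out why the translation transports {\bf LEM} and why {\bf iRC!$_{\mathbf{N},\mathbf{N}}$} is validated via Strong Collection and Exponentiation. Your closing remark that ${\bf CZF}+{\bf LEM}$ in fact coincides with {\bf ZF} is also made in the paper, immediately after this corollary, as a separate proposition attributed to Aczel.
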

\begin{proof}
It follows from  Prop.~\ref{compczf} and Cor.~\ref{inchao}.
\end{proof}

It is worth noting that in the case of {\bf CZF}, the incompatibility with classical predicativity can be shown by a very direct and simple argument proved by Aczel in \cite{aczel78}:
\begin{proposition}
    The axiom of exponentiation and the law of excluded middle for $\Delta_0$-formulas imply the powerset axiom.
\end{proposition}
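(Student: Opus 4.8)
The plan is to realise, for an arbitrary set $A$, the class $\mathcal{P}(A)\ourdef\{B\mid B\subseteq A\}$ as the surjective image of the exponential $2^A$ under a definable operation, so that it is a set by Replacement (which in {\bf CZF} follows from Strong Collection).

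First I would fix the two-element set $2\ourdef\{0,1\}$ with $0\ourdef\emptyset$ and $1\ourdef\{0\}$, whose existence follows from Pairing, noting that $0\neq 1$ is provable in {\bf CZF}. By the axiom of exponentiation the class $F\ourdef 2^A$ of all functional relations from $A$ to $2$ is a set. For each $f\in F$ the formula $f(x)=1$ (with parameters $f,1$) is $\Delta_0$, so restricted separation yields the set $B_f\ourdef\{x\in A\mid f(x)=1\}$, and plainly $B_f\subseteq A$; moreover, since $f(x)\in 2$, we have $f(x)=0\lor f(x)=1$ already constructively, so the characteristic function $\chi_{B_f}$ below will be unambiguous.

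Conversely, given any $B\subseteq A$, the formula $x\in B$ is atomic, hence $\Delta_0$, and the hypothesis of $\Delta_0$-excluded middle gives $x\in B\lor x\notin B$ for every $x\in A$. Consequently the relation $(x\in B\land y=1)\lor(x\notin B\land y=0)$ is total on $A$ and, as $0\neq 1$, single-valued, i.e. it is a function $\chi_B\colon A\to 2$, so $\chi_B\in F$. A routine verification using Extensionality (for sets and, hence, for functions) shows $B_{\chi_B}=B$ and $\chi_{B_f}=f$; thus $f\mapsto B_f$ maps $F$ onto the class of all subsets of $A$.

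Finally, applying Replacement to $F$ along the operation $f\mapsto B_f$ produces the set $\{B_f\mid f\in F\}$, which by the previous step is exactly $\mathcal{P}(A)$; hence the powerset axiom holds. The argument is essentially bookkeeping with the standard {\bf CZF} axioms, and the one place where the classical hypothesis is indispensable is the totality of $\chi_B$: without deciding $x\in B$ one cannot turn an arbitrary subset of $A$ into a characteristic function, which is precisely why intuitionistic exponentiation alone does not deliver the powerset.
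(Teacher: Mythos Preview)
Your proof is correct. The paper takes a slightly different, more modular route: it first observes that $\Delta_0$-\textbf{LEM} forces $\mathcal{P}(1)$ to be a set (indeed, any $B\subseteq 1=\{0\}$ satisfies $0\in B\lor 0\notin B$, so $B=1$ or $B=\emptyset$, whence $\mathcal{P}(1)=\{0,1\}$), and then invokes the known equivalence, over \textbf{CZF}, between the powerset axiom and ``exponentiation $+$ $\mathcal{P}(1)$ is a set''. Your argument essentially unfolds the second step of that equivalence by hand for the special case $\mathcal{P}(1)=2$: you exponentiate $2^A$, use $\Delta_0$-\textbf{LEM} to build characteristic functions, and then apply Replacement. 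The paper's decomposition isolates the single point where \textbf{LEM} is needed (the sethood of $\mathcal{P}(1)$) and defers the rest to a reusable lemma; your version is self-contained and makes the dependence on Strong Collection/Replacement and restricted separation explicit, which is pedagogically useful.
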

\begin{proof}
    It is enough to show that $\mathcal{P}(1)$ is a set. This follows from an application of {\bf LEM}.
    The rest follows from the fact that the powerset axiom is equivalent to exponentiation plus the existence of the powerset of the singleton set.
\end{proof}

Moreover, since the powerset axiom implies subset collection, which in turn implies exponentiation, it follows: 
\begin{proposition}
    {\bf CZF} with classical logic and {\bf ZF} prove the same set of theorems.
\end{proposition}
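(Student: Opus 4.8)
The plan is to prove this by \emph{mutual derivability of axioms}: since $\mathbf{CZF}$ and $\mathbf{ZF}$ are formulated in the same language (only $\in$), it suffices to show that $\mathbf{CZF}+\mathbf{LEM}$ derives every axiom of $\mathbf{ZF}$ and, conversely, that $\mathbf{ZF}$ derives every axiom of $\mathbf{CZF}$ together with $\mathbf{LEM}$; the equality of theorem sets then follows immediately.

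\noindent For the first direction, the axioms of extensionality, pairing, union, and infinity are literally common to both theories. The powerset axiom is exactly the content of the preceding proposition: subset collection implies exponentiation, and exponentiation together with $\mathbf{LEM}$ for $\Delta_0$-formulas yields powerset. For Foundation: $\mathbf{CZF}$ has the $\in$-induction scheme, and with $\mathbf{LEM}$ the standard argument applies --- if a nonempty $a$ had no $\in$-minimal element then $x\cap a=\emptyset\ \supset\ x\notin a$, so by $\in$-induction $a=\emptyset$, a contradiction, whence by $\mathbf{LEM}$ some $m\in a$ satisfies $m\cap a=\emptyset$. The only genuinely new thing to verify is \emph{full} separation, since $\mathbf{CZF}$ has only restricted separation. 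Here I would first observe that strong collection implies the replacement scheme (given $\forall x\in a\,\exists! y\,\psi(x,y)$, strong collection directly returns the image set, using uniqueness). Then, for an arbitrary formula $\phi$ and a set $a$, consider the class function sending $x$ to $\{x\}$ if $\phi(x)$ and to $\emptyset$ otherwise: by $\mathbf{LEM}$ it is total, and it is single-valued because $x\in\{x\}$ while $x\notin\emptyset$, so $\{x\}\neq\emptyset$. By replacement its image $d$ is a set, and by union $\bigcup d$ is a set; one checks that $\bigcup d=\{x\in a : \phi(x)\}$. Hence full separation, and therefore all of $\mathbf{ZF}$, is available in $\mathbf{CZF}+\mathbf{LEM}$.

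\noindent For the converse direction, $\mathbf{LEM}$ holds because $\mathbf{ZF}$ is classical, and among the axioms of $\mathbf{CZF}$ only the collection-style ones are not immediate. I would recall the standard facts: $\mathbf{ZF}$ proves the $\in$-induction scheme (via transitive closures and Foundation); restricted separation is a special case of full separation; $\mathbf{ZF}$ proves the collection scheme (using the cumulative hierarchy $V_\alpha$ and replacement), from which strong collection follows by trimming the collected set with full separation; and subset collection is an immediate consequence of powerset and separation --- given sets $a,b$ one may take $c:=\mathcal{P}(b)$, and for any parameter $u$ with $\forall x\in a\,\exists y\in b\,\phi(x,y,u)$ the set $d:=\{y\in b : \exists x\in a\,\phi(x,y,u)\}$ belongs to $c$ and witnesses the required condition. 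Thus $\mathbf{ZF}$ derives every axiom of $\mathbf{CZF}+\mathbf{LEM}$.

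\noindent Combining the two directions, $\mathbf{CZF}+\mathbf{LEM}$ and $\mathbf{ZF}$ prove exactly the same sentences. The main obstacle --- really the only step beyond bookkeeping --- is the derivation of full separation from restricted separation, strong collection, and $\mathbf{LEM}$: one must ensure that the auxiliary class function is both total (this is precisely where $\mathbf{LEM}$ is used) and functional, so that the instance of replacement is legitimate and strong collection genuinely delivers the image set. Everything else reduces to transcribing well-known facts about $\mathbf{ZF}$.
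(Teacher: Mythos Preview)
Your argument is correct. The paper does not actually prove this proposition: it simply refers the reader to Aczel's original paper, so there is no ``paper's own proof'' to compare with beyond the citation. What you have supplied is essentially the standard unpacking of that reference --- mutual derivability of axioms over the common language --- and each step is sound: the derivation of powerset invokes exactly the preceding proposition, the passage from $\in$-induction to foundation via {\bf LEM} is the usual one, and the derivation of full separation from strong collection plus {\bf LEM} (by replacing each $x\in a$ with $\{x\}$ or $\emptyset$ according to $\phi(x)$ and taking the union of the image) is the standard trick. The converse direction is routine, and your handling of subset collection via $c:=\mathcal{P}(b)$ is fine. In short, you have written out what the paper leaves to the citation; nothing is missing or wrong.
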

\begin{proof}
    See \cite{aczel78}.
\end{proof}

\section{A possible way out: the Minimalist Foundation}\label{sec6}

\noindent In this section, we want to show that there exists a constructive predicative system that does not present the same incompatibility with classical predicativity as the other systems previously considered and, at the same time, is enough expressive to allow for the development of non-trivial mathematics by keeping a form of {\it exponentiation} for a primitive notion of functions defined by $\lambda$-terms. This system is the {\it Minimalist Foundation} \mf.

\mf\  is a two-level foundation for constructive mathematics, that was first conceived in \cite{ms05} and then fully developed in \cite{m09}.
It consists of an intensional level, called \mtt,  and an extensional one, called \emtt, together with an interpretation of the latter into the first, as defined in \cite{m09}.

The two-level structure facilitates the compatibility of {\bf MF} with the most relevant constructive and classical foundations as shown in \cite{m09}: the intensional level \mtt\ is compatible with
\mltt\  and Coquand-Paulin's Calculus of Inductive Constructions \cite{Coqpau},
while the extensional level is compatible with {\bf CZF}, \hott\ and the internal theory  of elementary topoi in \cite{LS1986, Maietti05}.  It is a remarkable property that {\it both} levels of {\bf MF} are compatible with \hott\ as shown in
\cite{cm2024}.

Moreover, both levels of \mf\ extend a version of Martin-L{\"o}f's type theory with a primitive notion
of proposition: \mtt\ extends the intensional type theory in \cite{nordstrom1990programming}, while \emtt\ extends the extensional version presented in \cite{martin-lof:bibliopolis}.

\noindent In \mf\ there are four primitive distinct sorts: small propositions, propositions, sets and collections. The following diagram shows the relations between these primitive sorts

\begin{center}
$$
\xymatrix@C=0.5em@R=0.5em{
 {\mbox{\bf small propositions}} \ar@{^{(}->}[dd]\ar@{^{(}->}[rr]&  &
 {\mbox{\bf sets}} \ar@{^{(}->}[dd] \\ 
&&\\
  {\mbox{\bf propositions}} \ar@{^{(}->}[rr] && {\mbox{\bf collections}}
}
$$
\end{center}
\vspace{1.0em}

\noindent The basic forms of judgement in \mf\ include:
\begin{center}
    
 $ A \ set\ [\Gamma] \qquad B\ coll\ [\Gamma] 
\qquad \phi\ prop\
 [\Gamma]\qquad \psi\ prop_s\
 [\Gamma] 
 $
\end{center}
 to which we add the meta-judgement
$$A\ \type\ [\Gamma]$$
where `\type ' is to be interpreted as a meta-variable ranging over the four basic sorts.


The set-constructors of \mtt\ and \emtt\  include those of first order  Martin-L{\"o}f's type theory, respectively  as presented in \cite{nordstrom1990programming} and \cite{martin-lof:bibliopolis}. Hence we have the following set constructors: the empty set $N_0$, the singleton set
$N_1$, the set of lists  $List(A)$ over a set $A$, hence the set of natural numbers $\mathsf{N}$, the indexed sum and the dependent product of the family of sets $B(x)\ set \ [x\in A]$ denoted respectively as $\Sigma_{x\in A}\ B(x)$  and  $\Pi_{x\in A}\ B(x)$, the disjoint sum $A+B$ of the set $A$ with the set $B$. In the case of the extensional level \emtt\, we have that sets are closed under effective quotients $A/R$ over a set $A$, provided that $R $ is a small equivalence relation $R(x,y) \
prop_s\ [x\in A,y\in A]$. Such quotient constructors are not included in the intensional level \mtt. Furthermore, both \mtt\ and  \emtt\ include small propositions $\phi\  prop_s$ regarded as sets of their proofs and defined are those propositions closed under intuitionistic connectives, quantifiers, and equalities restricted to sets. 

\noindent Collections of \mtt\ and \emtt\ include their sets, the indexed sum  $\Sigma_{x\in A} B(x)$  of the family of collections $B(x)\  coll \ [x\in A]$ indexed over a collection $A$, and propositions $\psi\ prop$ regarded as collections of their proofs. A significant distinction between \mtt-collections and \emtt-collections is the following: the former includes the proper collection of small propositions $\mathsf{prop_s}$ and  
the collection of small propositional functions  $A\rightarrow  \mathsf{prop_s}$ over a set $A$ (which  are never sets  predicatively when $A$ is not empty), while the latter includes
the power-collection of the singleton  $ {\cal P}(1)$, which is the quotient of the collection of small propositions under the relation of equiprovability,  and  
the power-collection $A\rightarrow  {\cal P}(1)$ of a set $A$, that can be written simply as ${\cal P}(A)$.

\noindent In \mtt\  we identify {\it predicates over the set $A$}
with {\it propositional functions} defined as primitive {\it $\lambda$-terms}. Hence, in \emtt\
a subset of a set $A$ is identified with the equivalence class of propositional function up to equiprovability. 
So this use of $\lambda$-terms corresponding to predicates is a major difference with an axiomatic set theory where subsets of a set $A$ are simply defined as sets
in a certain relation with $A$ and hence are identified with functional relations into the boolean set $\{0,1\}$ when the logic is classical.

\noindent Both propositions of \mtt\ and \emtt\ include standard connectives and quantifiers. However, they differ in the way the rules for propositional equality are given. Indeed, \mtt-propositions include an {\it intensional} propositional equality defined as in \cite{nordstrom1990programming}, except for the fact that the elimination rule is {\it restricted} to propositions. While \emtt-propositions include an {\it extensional} propositional equality with an elimination rule formulated like that in \cite{martin-lof:bibliopolis}.

\noindent Finally, we recall that \emtt-propositions are {\it proof-irrelevant} and have one canonical element denoted as $\mathsf{true}$. 


\noindent It is relevant for our discussion to remark that elimination rules for propositions act only towards propositions and not towards collections. The same applies to small propositions. In this way, \mtt\ and \emtt\ do not generally validate choice principles. In particular, even unique choice cannot be derived in both levels of  \mf, thanks to results in \cite{qu12,MR16,Ma17} and the non-derivability of unique choice in Coquand's Calculus of Constructions {\bf CoC}, as first proved in \cite{st92}. This is in contrast with what happens in \mltt\ and in \hott, as we have seen in Section \ref{sec3}. The fact that choice principles are not validated in \mf\ is important in connection with the notion of compatibility introduced in Section \ref{sec5}. In this way, \mf\ turns out to be also compatible with theories where choice does not generally hold, such as {\bf CoC}.

Now, we discuss the status of the internal rule of unique choice in \mf, in particular in \mtt.

\begin{definition}
    In \mtt\ the {\it internal rule of unique choice}  holds if for any derivable small proposition
    \begin{equation*}
        R(x,y)\ prop_s\ [ x \in A, y\in B]
    \end{equation*}
    and for any derivable judgement of the form 
   \begin{equation*}
        p(x) \in \exists !_{ y\in B}\  R(x,y)\ [x \in A]
    \end{equation*}
 there exists a proof-term $q$ such that the following judgement is derivable
\begin{equation*}
    q \in\exists_{f\in A\rightarrow B}\ \forall_{x\in A} \ R(\, x,\mathsf{Ap}(f,x)\, )\ \
\end{equation*}

We call  {\it number-theoretic internal rule of unique choice} the instance of the internal rule of unique choice when $A$ and $B$ coincide with the set $\mathsf{N}$ of natural numbers.

Instead, we reserve the name {\it rule of unique choice} for the version of the above rule requiring
the external existence of the choice function term $f$, i.e. if $\exists !_{ y\in B}\  R(x,y)\ [x \in A]$ holds then we can derive a term $f\in A\rightarrow B$ such that
$R(\, x,\mathsf{Ap}(f,x)\, )\ [x\in A]
$ holds.
\end{definition}

Concerning this rule observe that:
\begin{proposition}\label{norun}
The rule of unique choice
is not valid in \mtt. 
\end{proposition}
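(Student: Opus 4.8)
\emph{Proof plan.} The strategy is to reduce the claim to Streicher's theorem \cite{st92} that unique choice is not derivable in Coquand's Calculus of Constructions {\bf CoC}. First I would invoke the interpretation of \mtt\ into {\bf CoC} (compatibly extended with the inductive types interpreting $N_0$, $N_1$, $List$ and $+$), established along the lines of \cite{qu12,m09}: it sends sets to types, small propositions to types of the impredicative universe $\mathsf{Prop}$, the dependent product $\Pi_{x\in A}B(x)$ to the product type, $\mathsf{Ap}$ to application, intensional equality to the identity type, and $\forall,\exists$ to the $\Pi$-type and to the propositional truncation of the $\Sigma$-type. The crucial feature is that this interpretation is term-preserving: any proof-term derivable in \mtt, and in particular any element of a function set $A\to B$, yields a corresponding term of the translated function type.

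Next I would isolate a concrete instance of unique existence witnessing Streicher's independence result inside the image of the interpretation: sets $A,B$ and a small proposition $R(x,y)\ prop_s\ [x\in A, y\in B]$ of \mtt\ for which $\exists !_{y\in B}R(x,y)\ [x\in A]$ is derivable already in \mtt, while the translated relation admits no choice function in {\bf CoC}. That such an instance exists --- definable in the predicative system \mtt\ and not merely in {\bf CoC} --- is exactly what the analyses in \cite{MR16,Ma17} provide: one either transports Streicher's construction into the image of the interpretation, or, equivalently, exhibits a model of \mtt\ (a realizability-style model, or one arising from the elementary quotient completion of \cite{MR16}) that validates \mtt\ yet contains no element splitting the relevant functional relation.

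With such an instance in hand, the statement follows by contradiction: were the rule of unique choice valid in \mtt, then, applied to $A,B,R$, it would yield a term $f\in A\to B$ with $R(x,\mathsf{Ap}(f,x))\ [x\in A]$ derivable; translating $f$ along the interpretation would produce a choice function for the translated relation in {\bf CoC}, contradicting the choice of the instance.

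The main obstacle is the middle step. Streicher's counterexample exploits the impredicativity of {\bf CoC}'s universe $\mathsf{Prop}$, whereas \mtt\ is predicative apart from its single proper collection $\mathsf{prop_s}$ of small propositions; one must therefore verify that the relevant unique-existence is derivable in \mtt\ using only sets, small propositions and --- decisively --- the elimination rule for $\exists$ \emph{restricted to propositions}, which blocks the $\Sigma$-style first projection available in \mltt\ and \hott. In the write-up I would not rebuild the model but reduce to the statements of \cite{qu12,MR16,Ma17}, stressing that this restriction on the elimination of $\exists$ is precisely what makes the failure of the rule possible.
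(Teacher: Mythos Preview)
Your overall strategy---reduce to the interpretation of \mtt\ into {\bf CoC} and invoke Streicher's independence result \cite{st92}---is the same machinery the paper relies on, and indeed the paper's proof is nothing more than a pointer to Th.~1 of \cite{Ma17}, so in that sense you are aligned.

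Where your plan differs is in the shape of the reduction. Your main line asks for a \emph{specific closed instance} $A,B,R$ in \mtt\ with $\exists!_{y\in B}R(x,y)\ [x\in A]$ derivable in \mtt\ and with no choice function for its translation in {\bf CoC}. You correctly flag this as the obstacle: Streicher's counterexample lives in the impredicative fragment of {\bf CoC}, and it is not clear that it lifts to a closed \mtt-instance whose unique existence is already \mtt-derivable. The route actually taken in \cite{Ma17} (and echoed in the paper immediately after this proposition, in Prop.~\ref{rulax} for the internal rule) sidesteps this by first proving the lemma \emph{rule of unique choice $\Rightarrow$ axiom of unique choice} inside \mtt; one exploits that the rule applies in an arbitrary context, in particular the generic context $[R\in A\to B\to\mathsf{prop_s},\ w\in\forall_x\exists!_y R(x,y)]$, and then discharges to obtain the axiom. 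Once the rule is shown to entail the axiom, the failure of the axiom---which \emph{does} follow directly from the {\bf CoC} interpretation together with \cite{st92}---finishes the argument without ever having to name a specific predicative counterexample.

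So your plan is not wrong, and your second alternative (exhibit a model of \mtt\ refuting the rule) is essentially what the cited references do; but the ``find an instance and translate the would-be $f$'' framing leaves the hardest step unstructured, whereas the rule-implies-axiom lemma is the organizing insight that makes the reduction to \cite{st92} immediate. If you rewrite the plan around that lemma, it becomes the paper's proof.
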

\begin{proof}
See proof of Th.1 of \cite{Ma17}.
\end{proof}
The existence property of \mtt\ would allow
to deduce the equivalence of the rule of unique choice with its internal version and
hence the non-validity of the internal rule of unique choice in \mtt, too.

\noindent \noindent  Nevertheless, it is worth pointing out that without
relying on the existence property, we can show the non-validity of the internal rule of unique choice following
the same argument in \cite{Ma17} since this can also be exported to
extensions of \mtt,  not validating the existence property,  such as its classical version:
\begin{proposition}\label{rulax}
    If \mtt\ satisfies the internal rule of unique choice, then it satisfies the axiom of unique choice. 
\end{proposition}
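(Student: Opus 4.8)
The plan is to derive the general axiom of unique choice from its internal counterpart by reducing the general case to a form in which the internal rule can actually be applied — namely, to a statement that \mtt{} \emph{proves}, with the choice function then extracted \emph{internally}. The point is that the internal rule of unique choice, as stated, has a derivability hypothesis (``for any derivable judgement $p(x) \in \exists!_{y\in B} R(x,y)\ [x\in A]$'') and an internal conclusion (the existence of a proof-term $q$ of $\exists_{f\in A\to B}\forall_{x\in A} R(x,\mathsf{Ap}(f,x))$). So the argument must show that \emph{whenever} we have an arbitrary derivation witnessing $\exists!_{y\in B}R(x,y)\ [x\in A]$ for a small proposition $R$, we can massage it into the precise shape the internal rule consumes, and then unpack the resulting internal existential into an \emph{external} choice function term, which is exactly what the (non-internal) axiom of unique choice asserts.

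First I would fix a small proposition $R(x,y)\ prop_s\ [x\in A, y\in B]$ together with a proof-term $p(x)\in\exists!_{y\in B}R(x,y)\ [x\in A]$, which is the hypothesis of the axiom of unique choice. Second, I would apply the internal rule of unique choice to exactly this data (note $R$ is small and $p$ is a genuine derivable judgement, so the rule applies verbatim), obtaining a proof-term $q\in\exists_{f\in A\to B}\forall_{x\in A}R(x,\mathsf{Ap}(f,x))$. Third — and this is the crux — I would use the elimination rule for the existential quantifier $\exists_{f\in A\to B}(\dots)$ to \emph{extract} from $q$ an actual term $f\in A\to B$ together with a proof-term witnessing $\forall_{x\in A}R(x,\mathsf{Ap}(f,x))$, hence a proof of $R(x,\mathsf{Ap}(f,x))\ [x\in A]$. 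This gives precisely the conclusion of the axiom of unique choice in the form stated in the excerpt's definition. The derivation is therefore: hypothesis $\Rightarrow$ (internal rule) internal $\exists f$ $\Rightarrow$ ($\exists$-elimination) external $f$ plus the pointwise property.

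The main obstacle is the third step: in \mf{} the elimination rule for propositions — including the existential quantifier — is restricted to act only toward propositions, \emph{not} toward collections, which is exactly why choice principles fail in \mf{} in the first place. Naively eliminating $\exists_{f\in A\to B}\forall_{x\in A}R(x,\mathsf{Ap}(f,x))$ to produce the \emph{term} $f$ would require eliminating toward the collection $A\to B$, which is not allowed. The resolution I would pursue is to observe that $q$ is a \emph{closed} (or context-$[x\in A]$-indexed) derivable proof-term, so one can appeal to a metatheoretic analysis of derivations of existential statements in \mtt{} (the same normalization/canonicity-style argument underlying the existence property, but used here only at the syntactic level, without presupposing the full existence property — consistent with the remark preceding the proposition that the \cite{Ma17} argument exports to extensions like the classical version) to read off the witness $f$ externally. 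Alternatively, and more cleanly, one notes that what we must produce is itself a \emph{derived rule}, not an internal implication: ``if $\exists!_{y\in B}R(x,y)\ [x\in A]$ is derivable then $f\in A\to B$ with $R(x,\mathsf{Ap}(f,x))\ [x\in A]$ is derivable.'' For a derived rule, extracting $f$ from a derivation of $q\in\exists_{f\in A\to B}(\dots)$ is a meta-level operation on the proof tree, sidestepping the object-level restriction on $\exists$-elimination entirely. I would make this precise by induction on the derivation of $q$, and this inductive argument — ensuring the witness can always be isolated in the presence of the context $[x\in A]$ — is where the real work lies; everything else is bookkeeping.
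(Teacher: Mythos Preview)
You have misidentified the target. The \emph{axiom} of unique choice is the internal implication
\[
\forall_{x\in A}\,\exists!_{y\in B}\,R(x,y)\ \rightarrow\ \exists_{f\in A\to B}\,\forall_{x\in A}\,R(x,\mathsf{Ap}(f,x)),
\]
and ``\mtt\ satisfies the axiom'' means this formula is derivable for every small $R$. What you set out to prove---``if $\exists!_{y\in B}R(x,y)\ [x\in A]$ is derivable then there is an external term $f\in A\to B$ with $R(x,\mathsf{Ap}(f,x))\ [x\in A]$''---is the \emph{external rule} of unique choice, which is a different statement (it is Prop.~\ref{norun} in the paper, handled separately). Your third step, extracting a witness $f$ from a closed derivation of $q\in\exists_f(\dots)$ by induction on derivations, is precisely the existence-property argument the paper explicitly says it wants to \emph{avoid}, so that the result exports to the classical extension of \mtt.

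The actual argument (from \cite{Ma17}, Prop.~3) runs differently and needs no metatheoretic extraction. The obstacle is that to prove the implication you must work under a hypothetical $w\in\phi$ with $\phi\equiv\forall_{x\in A}\exists!_{y\in B}R(x,y)$, and the internal rule demands a bare context $[x\in A]$. The trick is to absorb the hypothesis into the domain: since $\phi$ is a small proposition it is a set, so put $A'\ourdef\Sigma_{w\in\phi}A$ and $R'(z,y)\ourdef R(\pi_2 z,y)$. In the bare context $[z\in A']$ one derives $\exists!_{y\in B}R'(z,y)$ by applying $\pi_1 z\in\phi$ to $\pi_2 z$, so the internal rule fires and yields $q\in\exists_{g\in A'\to B}\forall_{z\in A'}R(\pi_2 z,\mathsf{Ap}(g,z))$. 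Now assume $w\in\phi$; eliminate the existential in $q$ \emph{toward the proposition} $\exists_{f\in A\to B}\forall_{x\in A}R(x,\mathsf{Ap}(f,x))$---this is permitted in \mtt, since elimination of propositional $\exists$ toward propositions is exactly what the restricted rules allow---and under the temporary $g$ set $f\ourdef\lambda x.\,\mathsf{Ap}(g,\langle w,x\rangle)$. Discharging $w$ gives the axiom. No external witness is ever produced; everything stays propositional.
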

\begin{proof}
The proof of Prop.3  in \cite{Ma17} can be easily adapted to the case of the internal rule of unique choice.
\end{proof}
Now, thanks to the model in \cite{st92}, we deduce from Prop.~\ref{rulax}  that
\begin{corollary}\label{coruc}
    \mtt\ does not validate the internal rule of unique choice. 
\end{corollary}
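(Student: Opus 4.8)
The plan is to prove the statement by contraposition, feeding Proposition~\ref{rulax} with a model-theoretic refutation of unique choice. Suppose, towards a contradiction, that \mtt\ validated the internal rule of unique choice. By Proposition~\ref{rulax} this would force \mtt\ to validate the axiom of unique choice \textbf{AC!}. Hence the whole argument reduces to showing that \textbf{AC!} is \emph{not} derivable in \mtt: once this is in place, the assumed internal rule of unique choice is impossible, and the corollary follows at once.

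To refute \textbf{AC!} in \mtt, I would pull back the countermodel of \cite{st92} along an interpretation of \mtt\ into Coquand's Calculus of Constructions \textbf{CoC} --- an interpretation of the kind underlying the compatibility of \mf\ with \textbf{CoC} recalled above. Explicitly, such a translation sends the sets of \mtt\ together with their constructors ($N_0$, $N_1$, $\mathsf{N}$, $List(A)$, $\Sigma$, $\Pi$, $+$) into a predicative universe of \textbf{CoC}, small propositions and propositions into the impredicative universe $\mathsf{Prop}$, collections into the surrounding predicative universe, and, crucially, the primitive $\lambda$-terms of \mtt\ to $\lambda$-terms of \textbf{CoC}, while translating the intuitionistic connectives, quantifiers and set-indexed equalities of \mtt\ to their \textbf{CoC} counterparts. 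This translation keeps the notion of \emph{functional relation} $R(x,y)\ [x\in A,\, y\in B]$ distinct from the notion of \emph{$\lambda$-function} $f\in A\rightarrow B$, so the instance of \textbf{AC!} that would be derivable in \mtt\ translates to precisely the unique-choice statement that \cite{st92} shows to fail in its model of \textbf{CoC}. Therefore \textbf{AC!} cannot hold in \mtt, which is the contradiction we were after. (Alternatively, one may bypass the translation and adapt the construction of \cite{st92} directly to \mtt, exactly as is done in the proof of Th.~1 of \cite{Ma17} for the external rule of unique choice; that argument is robust enough to carry over.)

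The step I expect to be the main obstacle is the faithfulness of this passage to \textbf{CoC}: it is not enough that \textbf{AC!} fails somewhere in the model, one must check that the interpretation \emph{reflects} derivability, i.e.\ that the single-valued relation witnessing the failure of unique choice in \cite{st92} is the image of a small relation formulable in \mtt, and that a hypothetical \mtt-proof of \textbf{AC!} would translate to a genuine \textbf{CoC}-proof of the corresponding instance. Verifying soundness of the translation on each rule of \mtt\ --- and, for the direct-model route, interpreting the extra set constructors (lists, disjoint sums, the universe of small propositions) inside the model while keeping $\lambda$-functions and functional relations genuinely separate --- is routine but is where the real work sits. By contrast, the logical core of the proof, namely turning a counterexample to \textbf{AC!} into a counterexample to the internal rule of unique choice, is handled entirely and cleanly by Proposition~\ref{rulax}.
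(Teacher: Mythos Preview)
Your argument is correct and matches the paper's own reasoning: the authors deduce the corollary from Proposition~\ref{rulax} together with the model of \cite{st92}, exactly via the compatibility of \mtt\ with \textbf{CoC} that you spell out. The paper leaves the lifting of the \cite{st92} countermodel to \mtt\ implicit (pointing to \cite{qu12,MR16,Ma17} earlier in the section), whereas you describe the translation and its soundness requirements explicitly; but the decomposition and the key ingredients are the same.
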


\noindent It is important to remark that already the intensional level \mtt\ can be regarded as an extension of ${\bf HA}^\omega$. Indeed, it is possible to show that there is a compatible translation of this theory into \mtt\ following a similar translation of Section \ref{sec5} for \mltt\ by interpreting ${\bf HA}^\omega$-propositions as \mtt-propositions. What is most interesting is that \mf\  extends ${\bf HA}^\omega$ while preserving the fundamental property of equi-consistency with its classical counterpart. Indeed, recently it has been shown that {\it both levels of \mf\ are equiconsistent with their classical counterparts} (collapsed to the extension of  the extensional level \emtt\ with the law of excluded middle), unlike theories such as \mltt, \hott\ and $\textbf{CZF}$: 

\begin{proposition}\label{equi} Both levels of \mf\ are equiconsistent with \emtt\ extended  the addition of the law of excluded middle $\phi \vee \neg \phi$ for all propositions $\phi$.
\end{proposition}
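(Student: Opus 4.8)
The plan is to close a cycle of consistency reductions among the three theories $\mtt$, $\emtt$ and $\emtt+\mathbf{LEM}$, the last being the extensional level augmented with $\phi\vee\neg\phi$ for every proposition $\phi$; three reductions suffice. The first is already available: by \cite{m09} there is an interpretation of $\emtt$ into $\mtt$ preserving derivability of every judgement, so consistency of $\mtt$ entails consistency of $\emtt$. The second is routine and exploits that classical logic trivialises precisely those features of the intensional level that are absent from the extensional one: under $\mathbf{LEM}$ the power-collection $\mathcal{P}(1)$ is the two-element set $\mathbf{2}$ and every small proposition is decidable, so $\mtt$ interprets into $\emtt+\mathbf{LEM}$ by sending the intensional propositional equality to the extensional one (the reflection rule being merely an extra rule on the target side), the collection $\mathsf{prop_s}$ of small propositions to $\mathbf{2}$, hence $A\to\mathsf{prop_s}$ to $A\to\mathbf{2}$, and acting as the identity on the remaining set and collection constructors; this is unproblematic exactly because in \mf\ elimination of propositions targets only propositions, so no proof-relevant content of $\mtt$-propositions needs to be carried along. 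Hence consistency of $\emtt+\mathbf{LEM}$ entails consistency of $\mtt$.

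The remaining reduction --- consistency of $\emtt$ entails consistency of $\emtt+\mathbf{LEM}$ --- is the substantive one and is obtained by a double-negation (``negative'') translation $(-)^N$ of $\emtt+\mathbf{LEM}$ into $\emtt$, in the style of G\"odel--Gentzen but adapted to the dependent, four-sorted language of \mf, as carried out in \cite{talkmillyober, MSdoubleneg}. Concretely, the plan is to translate sets and collections, together with the terms inhabiting them, essentially to themselves; to translate each proposition $\phi$ to a proposition $\phi^N$ by prefixing $\neg\neg$ in front of the equality atoms and of the clauses for $\vee$ and $\exists$, leaving $\bot$, $\wedge$, $\to$ and $\forall$ untouched, and --- crucially --- handling the extensional propositional equality so that every $\phi^N$ is provably $\neg\neg$-stable in $\emtt$, i.e.\ $\emtt\vdash\neg\neg\phi^N\to\phi^N$; and then to prove, by induction on derivations, that $\emtt+\mathbf{LEM}\vdash\mathcal{J}$ implies $\emtt\vdash\mathcal{J}^N$ for every judgement $\mathcal{J}$. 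The cases for the rules of many-sorted intuitionistic predicate logic, and for the translated instance of $\mathbf{LEM}$ --- which becomes the intuitionistically provable $\neg\neg(\phi^N\vee\neg\phi^N)$ --- go as in the usual G\"odel--Gentzen argument; what then must be checked, rule by rule, is that the $(-)^N$-images of the remaining $\emtt$-specific rules are derivable in $\emtt$: the rules for the extensional propositional equality, those for effective quotients $A/R$, and those for the power-collections $\mathcal{P}(1)$ and $\mathcal{P}(A)$. With $(-)^N$ at hand the cycle closes --- consistency of $\mtt$ gives consistency of $\emtt$, which gives consistency of $\emtt+\mathbf{LEM}$, which gives back consistency of $\mtt$ --- so all three theories are equiconsistent.

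I expect the main obstacle to be exactly the interaction of the negative translation with the extensional features of $\emtt$: double-negating equality atoms (as is needed to secure the stability lemma $\emtt\vdash\neg\neg\phi^N\to\phi^N$) makes the translated reflection rule demand a definitional equality out of a doubly-negated identity proposition, which $\emtt$ does not prove, and the effective-quotient rules inherit the same problem. The translation therefore has to treat extensional equality, quotient formation and subset membership with a dedicated $\neg\neg$-stable device that still validates reflection and effective quotients; designing such a device, simultaneously compatible with the four sorts, with the discipline that proposition-elimination targets only propositions, and with $\mathbf{LEM}$, is the delicate technical core of \cite{talkmillyober, MSdoubleneg}. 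This is also precisely the structural feature of \mf\ --- the existential quantifier being a proposition distinct from the $\Sigma$-type, with no strong elimination, and functions not being identified with functional relations --- that makes the argument go through, whereas for \mltt, \hott\ and $\mathbf{CZF}$ no such translation can exist, since, as shown in Section~\ref{sec5} via Corollary~\ref{inchao}, their classical extensions interpret $\mathbf{PA}^2$ and are therefore strictly stronger than the original predicative theories.
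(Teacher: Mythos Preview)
The paper's own proof of this proposition is nothing more than a deferral to \cite{MSdoubleneg} (and, elsewhere in the text, to \cite{talkmillyober}), so there is no in-paper argument to compare against at the level of detail you have written. Your proposal is essentially a reconstruction of what that cited work does: close a cycle of relative-consistency reductions, with the G\"odel--Gentzen-style negative translation of $\emtt+\mathbf{LEM}$ into $\emtt$ as the only non-routine link. That is indeed the strategy of \cite{MSdoubleneg}, and your identification of the interaction between double-negation and the extensional features of $\emtt$ (reflection, effective quotients, power-collections) as the technical core is accurate.

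One point deserves more care than you give it. Your second reduction, interpreting $\mtt$ into $\emtt+\mathbf{LEM}$ by sending $\mathsf{prop_s}$ to $\mathbf{2}$ and $A\to\mathsf{prop_s}$ to $A\to\mathbf{2}$, is stated too breezily. In $\mtt$ the collection $\mathsf{prop_s}$ is \emph{not} quotiented by equiprovability --- that quotient is precisely what produces $\mathcal{P}(1)$ on the $\emtt$ side --- so collapsing it to a two-element set is not automatic: you must verify that every $\mtt$-rule mentioning $\mathsf{prop_s}$ (its introduction, its decoding, and the formation of $A\to\mathsf{prop_s}$) is validated after this collapse, and that the intensional propositional equality of $\mtt$ (whose elimination is restricted to propositions) is faithfully simulated by the extensional one. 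These checks are not deep, but they are not ``routine'' in the sense you suggest, and in \cite{MSdoubleneg} the reduction between the two levels is handled with some care rather than dismissed in a sentence. Apart from this, your sketch is sound and aligned with the cited source.
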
 
\begin{proof}
For a proof, we refer to \cite{MSdoubleneg}.
\end{proof}

Furthermore, from \cite{m09,IMMS} we know a proof-theoretic upper bound for \mf:
\begin{proposition}\label{boun} The proof-theoretic strength of both levels of \mf\ are bounded by Feferman's theory $\widehat{ID_1}$ of non-iterative fixpoints.
\end{proposition}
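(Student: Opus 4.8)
Proof proposal for Proposition \ref{boun} (proof-theoretic upper bound for MF via $\widehat{ID_1}$).

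The plan is to exploit the two-level architecture of \mf\ together with the equiconsistency result of Proposition~\ref{equi}, reducing the whole question to a proof-theoretic analysis of a single system, namely \emtt\ extended with the law of excluded middle. First I would invoke Proposition~\ref{equi} to observe that it suffices to bound the proof-theoretic strength of the classical theory $\emtt + {\bf LEM}$, since equiconsistent theories that are mutually interpretable in a strength-preserving way share the same proof-theoretic ordinal (and the collapse in Proposition~\ref{equi} is realized by such interpretations, as established in \cite{MSdoubleneg}). Thus the target becomes: $\emtt + {\bf LEM}$ is proof-theoretically bounded by $\widehat{ID_1}$.

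Next I would recall the realizability/interpretation results already cited in the excerpt: from \cite{m09} the intensional level \mtt\ is interpretable in first-order Martin-L\"of type theory with one universe, and from \cite{IMMS} this latter system — and hence \mtt\ — is interpretable in Feferman's $\widehat{ID_1}$. The key step is then to push this bound through to the \emph{extensional, classical} level. Here I would use the interpretation of \emtt\ into \mtt\ that is part of the very definition of \mf\ in \cite{m09}: quotient sets, extensional equality, and power-collections of \emtt\ are all modelled inside \mtt\ (the quotient model / setoid interpretation). To accommodate ${\bf LEM}$, I would combine this with the double-negation translation of \cite{MSdoubleneg, talkmillyober}, which sends $\emtt + {\bf LEM}$ back into the constructive intensional setting without increasing proof-theoretic strength, landing ultimately in \mtt\ and therefore, by the chain above, in $\widehat{ID_1}$. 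Composing these interpretations yields the desired upper bound for \emph{both} levels, since by Proposition~\ref{equi} they are equiconsistent with the classical extensional theory just bounded.

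The main obstacle I anticipate is verifying that the composite interpretation — setoid model of \emtt\ into \mtt, followed by the double-negation translation handling ${\bf LEM}$, followed by the universe interpretation into $\widehat{ID_1}$ — is genuinely \emph{proof-theoretically faithful}, i.e. that it transports not merely consistency but the full $\Pi^0_2$ (or ordinal-analytic) content without hidden inflation; in particular one must check that the impredicative-looking power-collection ${\cal P}(1)$ of \emtt, once classical logic is added, does not secretly encode more than $\widehat{ID_1}$ can measure. This is precisely where the restriction of elimination rules for (small) propositions to propositions — so that ${\cal P}(1)$ is \emph{not} a set and cannot be used to build new sets by quantification — does the essential work, exactly as emphasized in the discussion of Weyl's predicativity above; the verification amounts to checking that this restriction survives all three interpretation steps. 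Since the hard analytic work is already contained in the cited references \cite{m09, IMMS, MSdoubleneg}, the proof reduces to assembling these interpretations and checking their composition, so I would simply refer the reader to those sources, as the statement's proof line already indicates.
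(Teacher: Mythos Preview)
Your argument is not wrong, but it takes a detour that the paper avoids and that in fact runs the dependency in the opposite direction from the paper's own logical flow. The paper treats Proposition~\ref{boun} as an immediate consequence of the cited references: \emtt\ interprets into \mtt\ via the quotient/setoid model that is part of the very definition of \mf\ in \cite{m09}, and \mtt\ in turn interprets into first-order Martin-L\"of type theory with one universe \cite{m09} and thence (or directly) into $\widehat{ID_1}$ \cite{IMMS}. No classical logic enters at all. In the paper, Proposition~\ref{equi} and Proposition~\ref{boun} are independent inputs, and they are \emph{combined} to obtain Corollary~\ref{bouncla}, the bound on the classical extension. Your route instead first invokes Proposition~\ref{equi} to pass to $\emtt + {\bf LEM}$, then applies the double-negation translation of \cite{MSdoubleneg} to come \emph{back} to the constructive side, and only then appeals to \cite{m09,IMMS}; the two middle steps essentially cancel, so the detour buys nothing and obscures the simple chain $\emtt \hookrightarrow \mtt \hookrightarrow \widehat{ID_1}$. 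The concern you raise about ${\cal P}(1)$ surviving the composite interpretation is a legitimate point to check, but it is already handled in \cite{m09} for the setoid model (where ${\cal P}(1)$ is modelled by the collection of small propositions in \mtt, not by a set) and does not require the classical extension to be analysed.
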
 
Therefore, the extension of \mf\ with the law of excluded middle is still predicative:

\begin{corollary}\label{bouncla} The proof-theoretic strength of the classical extension of \mf\ is  bounded by Feferman's theory $\widehat{ID_1}$ of non-iterative fixpoints.
\end{corollary}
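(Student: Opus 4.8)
The plan is to derive Corollary \ref{bouncla} as an immediate consequence of the two preceding results, Proposition \ref{equi} and Proposition \ref{boun}, by a short combinatorial argument about proof-theoretic strength and consistency.

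First I would recall that Proposition \ref{equi} states that both levels of \mf\ are equiconsistent with \emtt\ extended with the law of excluded middle for all propositions; call this latter theory \emtt+\textbf{LEM}. By definition, the classical extension of \mf\ referred to in the statement is precisely this theory \emtt+\textbf{LEM} (as made explicit in the parenthetical remark after Proposition \ref{equi}, the classical version of \mf\ collapses to \emtt+\textbf{LEM}). Next I would invoke Proposition \ref{boun}, which bounds the proof-theoretic strength of both levels of \mf\ by Feferman's theory $\widehat{ID_1}$ of non-iterative fixpoints. Since $\widehat{ID_1}$ is a classical theory, its proof-theoretic ordinal and consistency strength are unaffected by double-negation considerations, and the equiconsistency of \emtt+\textbf{LEM} with constructive \mf\ (via the double-negation interpretation underlying \cite{MSdoubleneg}) transfers the upper bound: \emtt+\textbf{LEM} is also interpretable in, or at least consistency-wise bounded by, $\widehat{ID_1}$. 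Concretely, one composes the double-negation/realizability interpretation of \emtt+\textbf{LEM} into constructive \mf\ given in \cite{MSdoubleneg} with the interpretation of \mf\ into $\widehat{ID_1}$ from \cite{m09,IMMS}, obtaining an interpretation of the classical extension of \mf\ into $\widehat{ID_1}$ that witnesses the desired proof-theoretic bound.

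The proof is therefore a two-line composition: by Proposition \ref{equi} the classical extension of \mf\ is equiconsistent with (indeed interpretable into) constructive \mf, and by Proposition \ref{boun} the latter is bounded by $\widehat{ID_1}$; chaining these gives the bound for the classical extension. I would phrase it as: "It follows immediately from Prop.~\ref{equi} and Prop.~\ref{boun}, since the equiconsistency result of \cite{MSdoubleneg} is established via an interpretation of the classical extension of \mf\ into its constructive version, whose strength is in turn bounded by $\widehat{ID_1}$ by \cite{m09,IMMS}."

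The main subtlety — and the only place where care is genuinely needed — is the distinction between mere equiconsistency and a bound on proof-theoretic strength: equiconsistency alone gives only that the two theories prove the same $\Pi^0_1$ sentences (or have the same consistency-strength ordinal), whereas "proof-theoretic strength bounded by $\widehat{ID_1}$" is a stronger, interpretation-level claim. The resolution is that the result of \cite{MSdoubleneg} is not a bare equiconsistency but proceeds through an actual syntactic interpretation (a Gödel–Gentzen style double-negation translation, possibly composed with a realizability or forcing layer) of \emtt+\textbf{LEM} into constructive \mf; such an interpretation preserves (up to the fixed translation) provability and hence transports proof-theoretic upper bounds, not just consistency. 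I expect this to be the point a careful reader would want spelled out, so the proof should explicitly note that the equiconsistency of \cite{MSdoubleneg} is witnessed by such an interpretation, which is what legitimizes transferring the $\widehat{ID_1}$ bound.
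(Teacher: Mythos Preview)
Your proposal is correct and takes essentially the same approach as the paper, which simply states that the result follows from Prop.~\ref{equi} and Prop.~\ref{boun}. Your additional discussion of the subtlety between mere equiconsistency and a genuine proof-theoretic bound (resolved by noting that \cite{MSdoubleneg} actually provides a syntactic interpretation) is a worthwhile clarification that the paper leaves implicit.
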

\begin{proof} It follows from Prop.\ref{equi} and  Prop.\ref{boun}.

From this, knowing that  {\bf HA$^\omega$} can be  embedded in \mtt\ and \emtt\ 
in a compatible way, we can deduce the non-derivability of  unique choice on natural numbers in their
classical version:
\begin{corollary}\label{norucla}
The classical versions of \emtt\ and \mtt\ do not validate the number-theoretic
internal rule of choice and hence the number-theoretic axiom of unique choice:
\begin{equation*}\label{uniquenat}
 ({\bf AC!_{\mathsf{N},\mathsf{N}}})\    \forall_{x\in \mathsf{N}}\ \exists !_{y\in \mathsf{N}}\ R(x,y)\ \rightarrow\ \exists_{f\in \mathsf{N} \rightarrow \mathsf{N}}\ \forall_{x\in \mathsf{N}}\ R(x,f(x))
\end{equation*}
\end{corollary}
\begin{proof}
First, observe that $\bf AC!_{\mathsf{N},\mathsf{N}}$ implies the number-theoretic internal rule of unique choice. The claim then follows from Proposition~\ref{bouncla} and Cor.~\ref{inchao}.
\end{proof}
Since the validity of the axiom of unique choice  in a theory can be transferred to its extensions, contrary to the rule of unique choice, 
from Cor.~\ref{norucla} we can immediately deduce the non-validity of ${\bf AC!_{\mathsf{N},\mathsf{N}}}$
in \mtt\ and \emtt\ themselves:
\begin{corollary}
Both \emtt\ and of \mtt\ do not validate the number-theoretic axiom of unique choice.
\end{corollary}

\noindent It is worth noting that in the extensional level \emtt\ of \mf\ we can also interpret the intuitionistic version of {\bf ACA}  in a way that preserves the meaning of all logical operators:

\begin{proposition}\label{acint}
The  intuitionistic version of {\bf ACA} is interpretable in the extensional level \emtt\  of \mf. Hence,  (the classical) {\bf ACA} can be interpreted in the classical version of \emtt.
\end{proposition}
\begin{proof}
It is sufficient to define a translation from {\bf ACA} to \emtt\ as follows. Numerical variables of {\bf ACA} are translated as variables of type $\mathsf{N}$ in \emtt. Set variables of {\bf ACA} are translated as terms of type $ \mathcal{P}(\mathsf{N})$. The symbols for zero and the successor in {\bf ACA} are interpreted by the corresponding constructors for $\mathsf{N}$ in \emtt. Atomic formulas of the form $s = t$ are translated as propositional equalities over $\mathsf{N}$ and those of the form $t \in X$ as small propositions $t\ \varepsilon\ X$\footnote{This notation can be defined in \emtt. Given $X \in \mathcal{P}(A)$ and $t \in A$, we define $t\ \varepsilon\ X$ as $\mathsf{Eq}(\mathcal{P}(1), X(t), [ \top])$. Moreover, it holds that $t\ \varepsilon\ \{x \in A \mid \phi(x) \} \leftrightarrow \phi(t) $ is true. }. In general, first-order formulas of {\bf ACA} are translated as small propositions with quantifiers ranging over $\mathsf{N}$, while second-order formulas are translated as propositions with quantifiers ranging over the collection $\mathcal{P}(\mathsf{N})$. 

Peano axioms can be derived in \emtt\ thanks to the rules for $\mathsf{N}$ and the presence of $\mathcal{P}(1)$. In particular, the induction scheme of {\bf ACA} follows from the elimination rule for $\mathsf{N}$, which acts towards all collections, including propositions. 

Finally, the arithmetical comprehension axiom is validated as follows: given a small proposition $\phi(x)\ [x \in \mathsf{N}]$, we can introduce $[\phi(x)]\in \mathcal{P}(1)\ [x \in \mathsf{N}]$ and then form $\{ x \in \mathsf{N}\ \mid\ \phi(x)\} \in \mathcal{P}(\mathsf{N})$. Then, it follows that there is an $X \in \mathcal{P}(\mathsf{N})$ such that for all $x \in \mathsf{N}$, $x\ \varepsilon\ X\ \leftrightarrow\ \phi(x)$ holds.
\end{proof}
Hence, from  prop.~\ref{acint} it follows that the classical version of \mf\ can formalize Weyl's classical predicative reformulation of analysis in {\it Das Kontinuum} following Feferman's analysis in \cite{feferman1998light, feferman2000significance}.

\end{proof}

\noindent Another significant feature of \mf\, which makes it compatible with Weyl's classical predicativity, is that {\it real numbers do not form a set} in its extensional level \emtt\ (and this holds also for their intensional representation in the quotient model over \mtt). This reflects Weyl's idea that the collection of real numbers is not an extensionally determinate collection.

\begin{proposition}
   In  the extensional level \emtt\  of \mf\ real numbers à la Dedekind or Cauchy do not form a set as well as in their classical version.
\end{proposition}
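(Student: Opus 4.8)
The key reduction is that it is enough to refute the statement for the \emph{classical} version of \emtt: were the judgement ``$\mathbb{R}$ is a set'' derivable in \emtt, it would a fortiori be derivable in the extension of \emtt\ by $\phi\vee\neg\phi$ for all propositions $\phi$, which by Proposition~\ref{equi} is exactly the classical extension of \mf\ to which Corollary~\ref{bouncla} applies; so I argue inside classical \emtt\ throughout. One clarification about terminology is needed: following Section~\ref{sec6} one must distinguish Cauchy sequences presented by $\lambda$-terms --- whose $=_{\mathbb{R}_C}$-quotient \emph{is} a set in \emtt\ (the Cauchy-regular $\lambda$-sequences of rationals form a $\Sigma$-set over $\mathbb{N}\to\mathbb{Q}$ cut out by a small proposition, and effective quotients of sets by small equivalence relations are sets) --- from Cauchy sequences presented by \emph{functional relations}; the proposition concerns the latter, genuinely set-theoretic notion, and it is sethood of that collection which has to fail.

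Assume for a contradiction that $\mathbb{R}$ --- either $\mathbb{R}_D$ (Dedekind cuts of rationals) or $\mathbb{R}_C$ in the functional-relation reading --- is a set in classical \emtt; restricting along the small proposition $0\le X\le 1$ we may take this to be a set $\mathbb{R}_{[0,1]}$ of reals in the unit interval. I would then build an interpretation of second-order arithmetic $\mathbf{PA}^2$ in classical \emtt\ preserving the logical operators. First-order variables go to variables of type $\mathbb{N}$ and the arithmetical constants to the corresponding constructors; second-order variables go to variables of type $\mathbb{R}_{[0,1]}$ --- this is the \emph{only} point where the assumption is used --- with second-order quantifiers interpreted as quantifiers over this set; the atomic formula $n\in X$ is interpreted by ``the coefficient of $3^{-(2n+2)}$ in the standard ternary expansion of $X$ equals $2$''. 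The point that makes this work is that, in classical \emtt, the assertion ``the $j$-th ternary digit of $X$ is $d$'' is a \emph{small} proposition, uniformly in $X\in\mathbb{R}_{[0,1]}$ and $j\in\mathbb{N}$: unwinding it through floor functions reduces it to finitely many assertions ``$\tfrac{m}{3^{j+1}}\le X$'' and ``$X<\tfrac{m}{3^{j+1}}$'', i.e.\ to membership of explicit rationals in the cut presenting $X$ (for $\mathbb{R}_D$) or to inequalities about the Cauchy data presenting $X$ (for $\mathbb{R}_C$), together with quantifiers over $\mathbb{N}$ and equalities on $\mathbb{N}$ --- all inside the small-proposition fragment. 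Hence every interpreted formula is a small proposition. For the comprehension axiom, given an interpreted formula $\theta(n)$ one exhibits the real $r_\theta:=\sum_{n\in\mathbb{N}}[\theta(n)]\cdot 2\cdot 3^{-(2n+2)}$ (with $[\theta(n)]$ equal to $1$ if $\theta(n)$ and to $0$ otherwise): its lower and upper cuts are given by small-proposition formulas that use \textbf{LEM} to decide each $\theta(n)$, the pair so defined is a genuine Dedekind cut --- inhabited, disjoint, open, monotone, and located, locatedness using precisely the classical decidability of $\theta$ --- and since the coded digits sit only at the positions $3^{-2},3^{-4},3^{-6},\dots$, the standard ternary expansion of $r_\theta$ has coefficient $2$ at $3^{-(2m+2)}$ exactly when $\theta(m)$ holds; so $r_\theta\in\mathbb{R}_{[0,1]}$ validates comprehension. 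The first-order induction scheme and the second-order induction axiom both follow from the elimination rule for $\mathbb{N}$, which in \emtt\ acts towards all types, propositions included.

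It follows that classical \emtt\ interprets $\mathbf{PA}^2$, hence has proof-theoretic strength at least that of $\mathbf{PA}^2$. But $\mathbf{PA}^2$ is impredicative --- it is the very theory shown impredicative in Corollary~\ref{inchao} --- and its strength vastly exceeds that of Feferman's $\widehat{ID_1}$, by which Corollary~\ref{bouncla} bounds the strength of classical \mf: contradiction. Therefore $\mathbb{R}$ is not a set in classical \emtt, and so, by the reduction above, not in \emtt\ either. The Dedekind and the Cauchy case are handled in one stroke: they differ only in how ``the $j$-th ternary digit of $X$ is $d$'' and the witness $r_\theta$ are spelled out --- through membership in the cut, respectively through the Cauchy approximations --- and everything else is verbatim the same.

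The step I expect to be the real work is confirming that digit extraction, ``the $j$-th ternary digit of $X$ is $d$'', genuinely lives in the small-proposition fragment, uniformly in $X$ and $j$: this means pushing base-$3$ digit arithmetic --- floor functions, the finitely many comparisons with rationals of denominator $3^{j+1}$ --- through the intuitionistic connectives, the quantifiers over $\mathbb{N}$, and the equalities on sets that generate the small propositions of \emtt, while fixing a convention around the non-uniqueness of ternary expansions (which is exactly why the coding uses only the positions $3^{-2},3^{-4},\dots$, so that the reals involved carry a canonical expansion), together with the parallel routine check that $r_\theta$ really is an element of $\mathbb{R}$. Once this bookkeeping is discharged, the embedding of $\mathbf{PA}^2$ and its clash with the $\widehat{ID_1}$ bound are immediate.
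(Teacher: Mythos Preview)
Your argument is correct and follows essentially the same strategy as the paper: reduce to the classical extension of \emtt, use the hypothetical sethood of an interval of reals to simulate impredicative comprehension over $\mathbb{N}$, and contradict the $\widehat{ID_1}$ bound of Corollary~\ref{bouncla}. The paper phrases the middle step more tersely---classically $\mathcal{P}(\mathbb{N})$ is in bijection with an interval, so if the interval is a set then impredicative definitions become available---whereas you spell out an explicit interpretation of $\mathbf{PA}^2$ via ternary-digit coding; these are the same idea at different levels of detail, and your clarification that the statement concerns Cauchy reals presented by \emph{functional relations} (since the $\lambda$-term Cauchy reals do form a set in \emtt) is a point the paper leaves implicit.
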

\begin{proof}
A direct proof of this was given in \cite{ober2020}
but it follows also from the predicativity of the classical version of \mf\ (see also \cite{MSdoubleneg}). If real numbers form a set in \mf\ then this is so in the classical version of \mf\
and every interval of real numbers form a set too. Since classically,  the power-collection of natural numbers can be put in bijection with an interval of real numbers, then it would follow that impredicative definitions can be employed in the classical version of \mf\ contrary to its predicativity established 
 thanks in prop.\ref{bouncla}.  
\end{proof}
Despite the absence of choice principles and quantification over all real numbers to form new sets,
a  development of  mathematics, and in particular of topology and analysis, within \emtt\
is possible if    point-free methods are adopted as advocated by Martin-L{\"o}f and Sambin
with the introduction of Formal Topology in \cite{sambin1987}.
Indeed, in \emtt\ the following point-free topologies are definable: 
\begin{proposition}
 In  the extensional level \emtt\  of \mf\ the point-free topology of Dedekind real numbers and Cantor space are definable.
\end{proposition}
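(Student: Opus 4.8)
The plan is to define explicitly, within \emtt, the formal topologies (in the sense of Formal Topology \`a la Sambin) whose formal points recover the Dedekind reals and the points of Cantor space, and to verify that all the data involved live at the appropriate sort level — in particular that the base set of basic opens is indeed a \emph{set}, while the cover relation and positivity predicate are small propositions indexed over that set, so that the whole structure is legitimately definable in \emtt.

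First I would treat Cantor space, which is the simpler case. Take as base set of basic neighbourhoods the set $\mathrm{List}(N_1+N_1)$ of finite binary lists, which is available in \emtt\ as a $\mathrm{List}$ over the two-element set $\mathbf{2}\ourdef N_1+N_1$. The cover relation $s \lhd V$ for $s\in\mathrm{List}(\mathbf{2})$ and $V$ a subset of $\mathrm{List}(\mathbf{2})$ (i.e. $V\in \mathrm{List}(\mathbf{2})\to\mathcal{P}(1)$) is generated inductively by the single splitting rule that $s$ is covered by $\{s*0,\,s*1\}$, together with reflexivity and transitivity; since \emtt\ has $\mathsf{N}$-induction acting towards all propositions, this inductively generated cover is definable as a small proposition indexed over the base, and the convergence (localisation) axiom is verified by a routine induction on list length. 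The positivity predicate can be taken trivially true (Cantor space is compact and overt), and one checks the formal-point condition yields exactly the infinite binary sequences. The key point to stress is that no quantification over the exponential $\mathbb{N}\to\mathbf{2}$ enters the definition of the topology: the topology is a structure on the set of \emph{finite} approximations.

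For the Dedekind reals I would use the standard point-free presentation of $\mathbb{R}$ whose base set is the set of pairs of rationals $(p,q)$ with $p<q$ — formalised in \emtt\ as a subset of $\mathbb{Q}\times\mathbb{Q}$, where $\mathbb{Q}$ is itself built as a quotient of a set of integer pairs, hence a legitimate \emtt-set. The cover relation is generated by the usual axioms (an interval is covered by its proper subintervals, by overlapping subdivisions, by shrinking towards a common sub-interval), each expressible as a small proposition over the base set; the positivity predicate records inhabited (non-degenerate) intervals. Using the correspondence between formal points of this topology, Dedekind cuts, and Bishop reals recalled in \cite{MSpointfree} (Section 4.3) and \cite{CN95}, one concludes that the formal points of the defined topology coincide with the Dedekind cuts of Definition~\ref{dedcut}. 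Again the crucial observation is that, although the \emph{collection} of formal points (the reals) is not a set in \emtt, the formal topology itself — base set, cover, positivity — is perfectly definable, precisely because it only ever refers to the set of rational intervals and to small propositions over it.

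The main obstacle I expect is bureaucratic rather than conceptual: one must check carefully that the inductively generated cover relations remain at the level of \emph{small propositions} (so that they can serve as the data of a formal topology inside \emtt) rather than escaping into proper propositions or collections, and that the generating axioms are phrased so that $\mathsf{N}$-induction over list length or over a suitable rank suffices to establish the convergence/localisation axioms without appealing to any choice principle or to quantification over function spaces. Once the sort-level bookkeeping is done, the verification of the formal-topology axioms and the identification of the formal points with Dedekind reals and with binary sequences follows the well-documented point-free literature cited above, in particular \cite{MSpointfree, CN95, sambinbook}.
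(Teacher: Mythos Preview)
Your proposal is correct and follows essentially the same route as the paper: the paper's proof simply points to the formalization of the Cantor-space cover in section~2.1 of \cite{Valentini-bar} and to Def.~3 of \cite{CN95} for the reals, observing that in both cases the cover is generated by a \emph{finite} set of axioms and can therefore be represented in \emtt\ by the same stage-by-stage argument you sketch (your $\mathbb{N}$-induction over list length/rank is exactly the mechanism that makes this work without general inductive definitions).

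Two minor remarks. First, the proposition only claims \emph{definability} of the point-free topologies, so your discussion of formal points and their identification with Dedekind cuts or binary sequences, while correct and illuminating, goes beyond what is required. Second, be a little careful with the phrase ``small propositions indexed over that set'': the cover $s\lhd V$ is really a family over $A\times\mathcal{P}(A)$, and $\mathcal{P}(A)$ is a collection; what you actually establish (and what suffices) is that for each fixed $V$ the predicate $s\lhd V$ is a small proposition in $s$, defined by primitive recursion on $\mathbb{N}$, since the membership $s\,\varepsilon\,V$ is small and the generating axioms only quantify over the base set.
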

\begin{proof}
The formalization of the point-free topology of the Cantor space can be carried on in \emtt\ as in section 2.1 of \cite{Valentini-bar}.
Then, observe that the point-free topology of real numbers can be defined as in Def.3 of \cite{CN95}
in terms of another cover generated by a finite set of axioms that can be represented in \emtt\ with a completely analogous proof to that just mentioned for the point-free topology of  Cantor space.
\end{proof}
\noindent Therefore, one can carry on in \mf\ the development of analysis as pursued in \cite{CN95,Palmgren05,pal07} (see also \cite{kawai2023}) and that of Positive Topology in \cite{sambinbook} and {\it loc.cit.}.

\section{Conclusion}
We have discussed the incompatibility  with Weyl's classical predicativity
of some foundations for Bishop mathematics like Aczel's constructive set theory {\bf CZF}, Martin-L\"of Type Theory \mltt\ and  Homotopy Type Theory \hott.
 We have observed that this incompatibility reduces to the fact that these theories interpret Heyting arithmetic in all finite types ${\bf HA^\omega}$, which is a many-sorted theory where sorts include finite types,  with the addition of the internal rule of number-theoretic unique choice {\bf iRC!$_{\mathbf{N},\mathbf{N}} $}, identifying functional relations over natural numbers with a primitive notion of function as  $\lambda$-terms. Indeed,
 {\bf HA$^{\omega}$} + {\bf iRC!$_{\mathbf{N}, \mathbf{N}}$}
 with the further addition of the law of excluded middle becomes impredicative since it interprets second-order Peano arithmetic.

\noindent Whilst Weyl's foundational system does not contain quantification over exponentiation of functions as observed by \cite{DBLP:journals/bsl/Avron20}, we have argued
 that it is not necessary to sacrifice exponentiation altogether  to reconcile Bishop's mathematics with Weyl's approach if we adopt a foundation like
the Minimalist Foundation.

Given that the Minimalist Foundation interprets {\bf HA$^{\omega}$}, 
the price to pay is to renounce the rule of unique choice as well as
all the number-theoretic choice principles characteristic of Bishop's mathematics and that real numbers à la Dedekind or Cauchy form a set.
All these facts together call for a point-free  development of  analysis by adopting 
  the topological methods advocated by Martin-L\"of and Sambin with the introduction of Formal Topology \cite{sambin1987}.

Hence, provided that a point-free reformulation of classical analysis is viable, as hinted in \cite{CN95,Palmgren05, pal07,kawai2023},
 \mf\  promises to be a natural crossroads between Bishop's constructivism and Weyl's classical predicativity.

In the future, besides establishing the exact proof-theoretic strength of  \mf,  which is currently an open problem,
we will explore whether the extensions of \mf\   with inductive and coinductive definitions in \cite{MMR21, MMR22, MS2023} 
are still  equiconsistent with their classical counterpart, as established for \mf\ in \cite{MSdoubleneg}, or at least whether they remain predicative after the addition of the law of excluded middle.
A positive answer to either of these questions would provide a more expressive constructive foundation capable of formalizing  general results of point-free mathematics as those presented in
\cite{CSSV03,mv04,curicind,sambinbook},
by keeping the 
compatibility both with Bishop's constructivism and with Weyl's classical predicativity, as in the case of \mf. 

\bibliographystyle{abbrv}
\bibliography{biblio}

\end{document}